\numberwithin{equation}{section}
\theoremstyle{plain}
\newtheorem{thm}{Theorem}[section]
\newtheorem{lem}[thm]{Lemma}
\newtheorem{prop}[thm]{Proposition}
\theoremstyle{definition}
\newtheorem{rem}[thm]{Remark}
\theoremstyle{remark}
\newtheorem{step}{Step}
\newcommand{\Z}{\mathbb{Z}}
\newcommand{\R}{\mathbb{R}}
\newcommand{\C}{\mathbb{C}}
\newcommand{\U}{\mathrm{U}}
\newcommand{\SU}{\mathrm{SU}}
\newcommand{\loc}{\mathrm{loc}}
\newcommand{\del}{\overline{\partial}}
\DeclareMathOperator{\rank}{rank}
\DeclareMathOperator{\im}{im}
\DeclareMathOperator{\Hom}{Hom}
\newcommand\restr[2]{{
  \left.\kern-\nulldelimiterspace 
  #1 
  \vphantom{\big|} 
  \right|_{#2} 
  }}
\begin{document} 
\title{Adiabatic limits and Kazdan--Warner equations}
\author{Aleksander Doan}
\begin{abstract}
We study the limiting behaviour of solutions to abelian vortex equations when the volume of the underlying Riemann surface grows to infinity. We prove that the solutions converge smoothly away from finitely many points. The proof relies on \emph{a priori} estimates for functions satisfying generalised Kazdan--Warner equations. We relate our results to the work of Hong, Jost, and Struwe on classical vortices, and that of Haydys and Walpuski on the Seiberg--Witten equations with multiple spinors.
\end{abstract}
\maketitle

\section{Introduction}
The \emph{vortex equations} on Riemann surfaces originate from the Ginzburg--Landau model of superconductivity. They were first brought to the attention of mathematicians by Jaffe and Taubes \cite{jt}. Since then there has been a considerable body of work aimed at understanding these equations and their many generalisations. The purpose of this article is to study one such generalisation in the context of \emph{adiabatic limits} and \emph{compactifications} in two- and three-dimensional gauge theories. 

Let $\Sigma$ be a closed Riemann surface and $L \to \Sigma$ a Hermitian line bundle. A \emph{(classical) vortex} is a pair of a unitary connection $A$ on $L$ and a section $\varphi$ of $L$ satisfying
\begin{equation}\label{eqn:vortex0}
\left\{
\begin{array}{l}
\del_{A} \varphi = 0, \\
i \Lambda F_{A}    =  1 - | \varphi |^2,
\end{array} \right.
\end{equation}
where  $\Lambda F_A$ is the Hodge dual of the curvature form. The space of vortices modulo gauge equivalence is simply the symmetric product $\mathrm{Sym}^d \Sigma$ with $d = \deg L$. A point in $\mathrm{Sym}^d \Sigma$ corresponds to a degree $d$ effective divisor $D$ and there exists a unique, up to gauge equivalence, solution $(A,\varphi)$ having  $D$ as the zero divisor of $\varphi$ \cite{taubes, noguchi, bradlow, garcia-prada1, garcia-prada2}.

Scaling the metric on $\Sigma$ by a factor $\epsilon^{-1}$ results in the modified equations
\begin{equation}\label{eqn:vortex1}
\left\{
\begin{array}{l}
\del_{A} \varphi = 0, \\
\epsilon^2 i\Lambda F_{A}    = 1 - | \varphi |^2.
 \end{array} \right.
\end{equation}
A question arises about the behaviour of solutions in the limit $\epsilon \to 0$, as the volume of $\Sigma$ grows to infinity. This is the idea of the adiabatic limit which has been used in many contexts in gauge theory and Riemannian geometry \cite{bismut, ds, fine}. The question for the vortex equations was answered by Hong, Jost, and Struwe \cite{hjs} whose result can be stated as follows. Fix a degree $d$ effective divisor $D = \sum_k m_k x_k$, where $m_k \in \Z_{\geq 0}$ and $x_k \in \Sigma$. If $(A_i, \varphi_i, \epsilon_i)$ is a sequence of solutions to \eqref{eqn:vortex1} having $D$ as the zero divisor and satisfying $\epsilon_i \to 0$, then after passing to a subsequence and changing $(A_i, \varphi_i)$ by gauge transformations,
\begin{itemize}
\item $\frac{i}{2\pi } \Lambda F_{A_i}$ converges as measures to the sum of Dirac deltas $
  \delta_{D} := \sum_k m_k \delta_{x_k}$, and
\item $\nabla_{A_i} \alpha_i \to 0$, $| \alpha_i | \to 1$, and  $F_{A_i} \to 0$ in $C^0_{\loc}$ on $\Sigma \setminus D$ \footnote{By abuse of notation, we denote by the same symbol a divisor and the underlying set of points.}.
\end{itemize}

In this paper we consider the following generalisation of \eqref{eqn:vortex1}. Fix auxiliary unitary bundles $E_1, \ldots, E_N$ over $\Sigma$ together with respective connections $B_1, \ldots$, $B_N$ and non-zero integer weights $k_1, \ldots, k_N$. Let $\epsilon > 0$ and $\tau \in \R$. The generalised equations for a connection $A$ on $L$ and a section $\varphi = (\varphi^1, \ldots, \varphi^N) \in \Gamma( \oplus_{j=1}^N E_j \otimes L^{\otimes k_j})$ are
 \begin{equation} \label{eqn:generalised}
\left\{
\begin{array}{l}
\del_{A \otimes B_j} \varphi^j = 0 \qquad \textnormal{for } j = 1, \ldots,  N, \\
\epsilon^2 i \Lambda F_A + \sum_{j=1}^N k_j | \varphi^j |^2 + \tau = 0.
\end{array} \right.
\end{equation}
Equations \eqref{eqn:generalised} fit into the more general setting of \emph{framed vortex equations} discussed in \cite{bradlow2}. As before, the moduli space of solutions has a holomorphic description \cite{bw}. 

Our main result concerns sequences of solutions to \eqref{eqn:generalised} with $\epsilon \to 0$. Suppose that either the weights $k_i$ are of mixed signs or $k_i > 0$ for all $i$ and $\tau < 0$ as otherwise there are no solutions.

\begin{thm}\label{thm:generalised}
Let $(A_i, \varphi_i, \epsilon_i)$ be a sequence of solutions to \eqref{eqn:generalised} such that  $\epsilon_i \to 0$ and the sequence of norms $\| \varphi_i \|_{L^2}$ is bounded. Then there is a finite set of points $D \subset \Sigma$ such that after passing to a subsequence and applying gauge transformations $(A_i, \varphi_i  )$  converges in $C^{\infty}_{\loc}$ on $\Sigma \setminus D$. The limit $(A,\varphi)$ satisfies \eqref{eqn:generalised} with $\epsilon = 0$ on $\Sigma \setminus D$.
\end{thm}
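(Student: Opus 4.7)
The plan is to reduce the bulk equation in \eqref{eqn:generalised} to a system of generalised Kazdan--Warner equations and then extract smooth limits on the complement of the concentration set by standard elliptic bootstrapping. For each $j$ let $u_i^j := \log |\varphi_i^j|^2$, well-defined off the zero locus of $\varphi_i^j$. The holomorphicity $\del_{A_i \otimes B_j}\varphi_i^j = 0$ together with the Poincar\'e--Lelong identity gives
\[
  \Delta u_i^j \;=\; -2 i \Lambda F_{A_i \otimes B_j} \;=\; -2 k_j\, i \Lambda F_{A_i} - 2 i \Lambda F_{B_j},
\]
and substituting the second line of \eqref{eqn:generalised} yields
\[
  \Delta u_i^j \;=\; \frac{2 k_j}{\epsilon_i^2}\Bigl(\sum_{l=1}^N k_l\, e^{u_i^l} + \tau\Bigr) - 2 i \Lambda F_{B_j},
\]
a coupled Kazdan--Warner system in $u_i^1, \ldots, u_i^N$. (For $E_j$ of rank greater than one one works with $|\varphi_i^j|^2$ directly via a Bochner identity; the structure of the argument is unchanged.)

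The first and main step is to derive uniform $L^\infty$ bounds $\|\varphi_i^j\|_\infty \leq C$ from the $L^2$ hypothesis and this system. When every $k_l > 0$ and $\tau < 0$, evaluating at a point where $|\varphi_i^j|^2$ attains its maximum and using $\Delta u_i^j \leq 0$ gives immediately $k_j^2 |\varphi_i^j|^2 \leq -k_j \tau + O(\epsilon_i^2)$, since the remaining terms on the right are non-negative. In the mixed-sign case this pointwise argument breaks down because the negatively weighted components enter with the wrong sign, and one must instead resort to integral estimates: testing the Kazdan--Warner equations against suitable cut-offs and applying Moser--Trudinger or Brezis--Merle inequalities, with the $L^2$ bound on $\varphi_i$ supplying the required control on $\int e^{u_i^l}$. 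I expect this to be the main analytical obstacle, and to be where the title of the paper becomes justified.

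Given uniform $L^\infty$ bounds, the bubbling set $D$ is easy to identify. Each $\varphi_i^j$ is a holomorphic section of the fixed-topology bundle $E_j \otimes L^{\otimes k_j}$, so its zero divisor has bounded total degree. After passing to a subsequence the zero divisors converge in the appropriate symmetric product, and $D$ is taken to be the union over $j$ of the (necessarily finite) supports of the limiting divisors. On any compact $K \subset \Sigma \setminus D$ the sections $\varphi_i^j$ are eventually bounded away from zero, so $u_i^j$ is smooth on $K$ with uniform $C^0$ control; the Kazdan--Warner PDE then bootstraps to uniform $C^k$ bounds on $u_i^j$ for every $k$, and the second equation in \eqref{eqn:generalised} gives in particular a uniform pointwise bound on $\Lambda F_{A_i}$ on $K$.

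Smooth convergence of $(A_i, \varphi_i)$ on $\Sigma \setminus D$ then follows from standard Uhlenbeck-type gauge fixing. On small balls in $K$ choose Coulomb gauges in which the bounded curvature supplies uniform $W^{1,p}$ bounds on the connection one-form, and combine with the elliptic equation $\del_{A \otimes B_j}\varphi^j = 0$ and the Kazdan--Warner bounds on $|\varphi_i^j|$ to iteratively bootstrap $(A_i, \varphi_i)$ to $C^\infty_{\loc}$. A diagonal/patching argument over an exhaustion of $\Sigma \setminus D$ by compact sets produces a global gauge and a limit $(A,\varphi)$. The first equation passes to the limit directly, while the second becomes $\sum_j k_j |\varphi^j|^2 + \tau = 0$ on $\Sigma \setminus D$ because $\epsilon_i^2\, i \Lambda F_{A_i} \to 0$ there.
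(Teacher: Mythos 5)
Your reduction to a Kazdan--Warner system for $u_i^j=\log|\varphi_i^j|^2$ is in the right spirit, but the proof has a genuine gap exactly where you flag ``the main analytical obstacle'': in the mixed-sign case you never establish the uniform $L^\infty$ bound, and the tools you name cannot do it. Written out, your system reads $-\Delta u_i^j = -\tfrac{2k_j}{\epsilon_i^2}\bigl(\sum_l k_l e^{u_i^l}+\tau\bigr)+2i\Lambda F_{B_j}$: the exponential nonlinearity carries a coefficient of size $\epsilon_i^{-2}\to\infty$. Moser--Trudinger and Brezis--Merle are calibrated to the non-degenerate Liouville regime $-\Delta u=Ve^u$ with $\|V\|_{L^\infty}$ bounded; here the Brezis--Merle mass threshold $4\pi/\|V\|_{L^\infty}$ collapses to zero with $\epsilon_i$, so the alternative gives no information uniform in $i$, and the $L^1$ control on $e^{u_i^l}=|\varphi_i^l|^2$ coming from the $L^2$ hypothesis does not compensate. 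This is not a presentational issue: the paper's introduction points out that, precisely because the moment map $\sum_j k_j|\varphi^j|^2$ is not proper when the weights have mixed signs, solutions of \eqref{eqn:generalised} admit no a priori $L^\infty$ bound, and this is the reason the argument is routed through the complex gauge group rather than through direct estimates on $|\varphi_i^j|$. There is also a normalisation problem you do not address: convergence of the zero divisors in $\mathrm{Sym}^d\Sigma$ does not give a positive lower bound for $|\varphi_i^j|$ on $K$ (a sequence of nowhere-vanishing holomorphic sections can tend to zero uniformly), so the $C^0$ control on $u_i^j$ over $K$ is not secured even away from the limiting zeros.

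A second, independent gap is the assertion that the curvature equation ``gives in particular a uniform pointwise bound on $\Lambda F_{A_i}$ on $K$.'' It gives $i\Lambda F_{A_i}=\epsilon_i^{-2}\bigl(-\tau-\sum_j k_j|\varphi_i^j|^2\bigr)$, so a bound on the curvature requires the moment map to vanish to order $\epsilon_i^2$ uniformly on $K$ --- which is essentially the adiabatic-limit conclusion you are trying to prove, not a consequence of $L^\infty$ bounds on the sections. The paper's proof supplies both missing ingredients by a different mechanism: one first proves convergence modulo $\mathcal{G}^c$ (compactness of the Dolbeault data up to an overall rescaling $\lambda_i$, whose boundedness above and below is itself a nontrivial step), writes $g_i=e^{f_i/2}$, and converts the curvature equation into the single scalar equation $\epsilon_i^2\Delta f_i-\sum_j P_i^je^{-a_jf_i}+\sum_j Q_i^je^{b_jf_i}-w_i=0$ for $f_i$. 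Proposition \ref{prop:bounds} then gives interior $C^k$ estimates for $f_i$ that are uniform in $\epsilon\in[0,\epsilon_0]$ --- this is where the real work against the degenerate limit is done --- and these estimates simultaneously yield the lower bounds on $|\varphi_i^j|$ away from $D$ and the curvature control via $i\Lambda F_{A_i}=i\Lambda F_{A_i'}-\Delta f_i$. To salvage your approach you would need an $\epsilon$-uniform a priori estimate for your coupled degenerate system playing the role of Proposition \ref{prop:bounds}; nothing in the proposal provides it.
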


\begin{rem}
More generally, we can assume $\tau = \tau_i$ to depend on $i$ as long as $\tau_i$ converges. If the sequence $\lambda_i := \| \varphi_i \|_{L^2}$ is unbounded, we obtain the convergence of $(A, \lambda_i^{-1} \varphi_i, \lambda_i^{-1} \epsilon_i)$ with $\tau_i = \lambda_i^{-1} \tau$. Thus, Theorem \ref{thm:generalised} describes also the limiting behaviour of solutions to \eqref{eqn:generalised} with $\epsilon=1$ and the $L^2$ norms diverging to infinity. In other words, we provide a description of the ends of the non-compact moduli space of solutions to \eqref{eqn:generalised}. This should be compared with recent results on sequences of solutions to the Hitchin equations \cite{mazzeo, mochizuki}.
\end{rem}

When $N=1$, $k_1 = 1$, and $\tau = 1$, we recover the classical vortex equations \eqref{eqn:vortex1}. As a result, we reprove and strengthen the result of \cite{hjs}. We should point out that our method of proof is different from that of \cite{hjs, mochizuki} and will be outlined at the end of this introduction. 

\begin{thm}
\label{thm:vortex}
Let $(A_i, \varphi_i, \epsilon_i)$ be a sequence of solutions to \eqref{eqn:vortex1} such that  $\epsilon_i \to 0$. Then there is a degree $d$ effective divisor $D$ on $\Sigma$ such that after passing to a subsequence and applying gauge transformations $(A_i, \varphi_i)$ converges in $C^{\infty}_{\loc}$ on $\Sigma \setminus D$ and $\frac{i}{2\pi} \Lambda F_{A_i} \to \delta_D$ as measures. The limit $(A, \varphi)$ satisfies $F_A = 0$, $|\alpha| = 1$ and $\nabla_A \alpha = 0$ on $\Sigma \setminus D$.
\end{thm}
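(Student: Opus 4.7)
The plan is to derive Theorem \ref{thm:vortex} from Theorem \ref{thm:generalised} together with a small amount of extra work specific to the classical case. Equation \eqref{eqn:vortex1} is exactly \eqref{eqn:generalised} with $N=1$, $k_1=1$, and $\tau = -1$. Integrating the curvature equation over $\Sigma$ gives $\int_\Sigma |\varphi_i|^2 = \mathrm{vol}(\Sigma) - 2\pi d\, \epsilon_i^2$, so the $L^2$-norms of $\varphi_i$ are uniformly bounded. Theorem \ref{thm:generalised} therefore furnishes, after passing to a subsequence and applying gauge transformations, a finite set $D \subset \Sigma$ and a smooth limit $(A,\varphi)$ on $\Sigma\setminus D$ satisfying $\del_A \varphi = 0$ and $|\varphi|^2 = 1$.

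From $|\varphi|^2 = 1$ and $\del_A\varphi = 0$ the remaining properties of the limit follow quickly. The identity $\partial|\varphi|^2 = \langle \partial_A\varphi, \varphi\rangle$ combined with $|\varphi|^2 = 1$ yields $\partial_A\varphi \perp \varphi$; since $\varphi$ does not vanish on $\Sigma\setminus D$ and pointwise spans the fibres of $L$, $\partial_A\varphi$ must vanish identically. Thus $\nabla_A\varphi = \partial_A\varphi + \del_A\varphi = 0$, and $F_A \cdot \varphi = \nabla_A^2\varphi = 0$ forces $F_A = 0$.

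For the measure convergence I would first invoke the standard maximum principle: the Weitzenb\"ock formula applied to the holomorphic section $\varphi_i$, together with $\epsilon_i^2 i\Lambda F_{A_i} = 1 - |\varphi_i|^2$, shows $|\varphi_i| \leq 1$ on $\Sigma$. Hence $\mu_i := \tfrac{i}{2\pi}\Lambda F_{A_i}\, d\mathrm{vol}_\Sigma = \tfrac{1}{2\pi \epsilon_i^2}(1 - |\varphi_i|^2)\, d\mathrm{vol}_\Sigma$ is a positive measure of total mass $d$. By weak-$*$ compactness a subsequence converges to a positive measure $\mu$ of mass $d$, and the locally uniform convergence $|\varphi_i|\to 1$ on $\Sigma\setminus D$ forces $\mu$ to be supported in $D$.

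To identify $\mu$ with $\delta_D$ for an effective divisor of degree $d$, I would pass to a further subsequence so that the zero divisors $\mathrm{div}(\varphi_i) \in \mathrm{Sym}^d\Sigma$ converge to some $D' = \sum_k m_k x_k$ of total degree $d$. Local uniform convergence of $\varphi_i$ to a non-vanishing limit on $\Sigma \setminus D$ forces $\supp(D') \subset D$. Around each $x_k$ one chooses a small disk $U_k$ with $\overline{U_k}\cap D = \{x_k\}$ and applies the Poincar\'e--Lelong formula; the vanishing of $\log|\varphi_i|$ on $\partial U_k$ in the limit then yields $\mu(\{x_k\}) = m_k$, whence $\mu = \delta_{D'}$ and $D = \supp(D')$. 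I expect the main obstacle to be this final step, in which integer multiplicities must be extracted from an \emph{a priori} only weakly convergent sequence of measures and reconciled with the bubble set $D$; everything else is a direct consequence of Theorem \ref{thm:generalised} together with elementary facts about holomorphic sections.
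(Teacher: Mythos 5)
Your reduction of \eqref{eqn:vortex1} to \eqref{eqn:generalised} with $N=1$, $k_1=1$, $\tau=-1$, and the integral identity giving the uniform $L^2$ bound, are both correct. The difficulty is that invoking Theorem \ref{thm:generalised} here is circular as the paper is organised: the paper proves Theorem \ref{thm:vortex} directly, and in the proof of Theorem \ref{thm:generalised} the case ``all weights positive and $\tau<0$'' --- which is exactly your case --- is disposed of with the sentence that the proof is the same as for Theorem \ref{thm:vortex}. So the entire analytic core of the theorem is missing from your proposal: the complex gauge fixing using compactness of $\mathcal{A}/\mathcal{G}^c$, the elliptic bootstrap for $\del_{A'}$ producing the limit $\varphi'$, the two-sided $C^0$ bound on the conformal factors $f_i$ (upper bound by the maximum principle applied to $e^{f_i}$, lower bound from $|\varphi_i|\le 1$), and the Kazdan--Warner \emph{a priori} estimates of Proposition \ref{prop:bounds}. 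A related gap: Theorem \ref{thm:generalised} only hands you an unidentified finite set $D$, and your argument only shows $\supp(D')\subset D$; to prove the statement as written you need $D=\supp(D')$ (otherwise $C^\infty_{\loc}$ convergence is only known on the smaller set $\Sigma\setminus D$ while the limit measure is $\delta_{D'}$). In the paper this is automatic because $D$ is \emph{defined} as the zero set of $\varphi'$ and the multiplicities are those of $\varphi'$; in your black-box approach it is not.

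The parts you do supply are essentially correct and, for the measure convergence, genuinely different from the paper. The $L^\infty$ bound $|\varphi_i|\le 1$, the positivity and total mass $d$ of $\mu_i=\frac{i}{2\pi}\Lambda F_{A_i}\,d\mathrm{vol}$, weak-$*$ compactness, and the Poincar\'e--Lelong computation on small disks constitute a valid alternative to the paper's Step 6, which instead trivialises $L$ near each $x_j$, reduces $\int_B F_{A_i}$ to a boundary integral by Stokes, and identifies the limit as the winding number of the unit-length parallel section via Lemma \ref{lem:flatness}; your route trades that degree computation for mass conservation plus the vanishing of the boundary term $\int_{\partial U_k} d^c\log|\varphi_i|$. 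One local slip: locally uniform convergence $|\varphi_i|\to 1$ does \emph{not} imply $\epsilon_i^{-2}(1-|\varphi_i|^2)\to 0$; to see that $\mu$ is supported in $D$ you should use $F_{A_i}\to F_A=0$ in $C^0_{\loc}$, which follows from the $C^\infty_{\loc}$ convergence of the connections and the flatness of the limit. Your derivation of $\nabla_A\varphi=0$ and $F_A=0$ from $|\varphi|=1$ and $\del_A\varphi=0$ is exactly the argument of Lemma \ref{lem:flatness} and is fine.
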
 

\begin{rem}
In the general setting of Theorem \ref{thm:generalised}, the limiting connection $A$ is no longer necessarily flat; we will see an example of this in point $(4)$ of Theorem \ref{thm:multi} below.
\end{rem}

\subsection*{Seiberg--Witten theory} 
The main application of Theorem \ref{thm:generalised} concerns generalised Seiberg--Witten equations in dimension three. To set the stage, let $Y$ be a closed Riemannian spin three-manifold. Let  $S$ be the spinor bundle and $E$, $L$ vector bundles over $Y$ with structure groups  $\SU(n)$ and $\U(1)$ respectively; we equip $E$ with a connection $B$. The \emph{Seiberg--Witten equations with multiple spinors} for a connection $A$ on $L \to Y$ and $\Psi \in \Gamma(\Hom(E, S \otimes L))$ are
\begin{equation} \left\{
\begin{array}{l}
\slashed{D}_{A \otimes B} \Psi = 0, \\
F_A = \Psi \Psi^* - \frac{1}{2} | \Psi |^2.
\end{array}
\right.
\end{equation} 
Here $\slashed{D}_{A \otimes B}$ is the Dirac operator twisted by $A$ and $B$ and in the second equation we use the identification $i \Lambda^2 T^*Y \cong i \mathfrak{su}(S)$ given by the Clifford multiplication. 

An analogous set of equations on four-manifolds was introduced in \cite{bw}. 
The three-dimensional version was studied by Haydys and Walpuski in relation to enumerative theories for associative submanifolds and $G_2$--instantons on $G_2$--manifolds \cite{donaldson-segal, walpuski, haydys4, hw}. The principal result of \cite{hw} concerns the limiting behaviour of sequences  of solutions $(A_i, \Psi_i)$ such that $\| \Psi_i \|_{L^2} \to \infty$. Haydys and Walpuski showed that there is a closed nowhere dense subset $Z \subset Y$ such that after passing to a subsequence and applying gauge transformations 
\[
   A_i \to A \textnormal{ weakly in } W^{1,2}_{\loc}
    \quad \textnormal{and} \quad 
  \Psi_i / \| \Psi_i \|_{L^2} \to \Psi \textnormal{ weakly in } W^{2,2}_{\loc}
\]
on $Y \setminus Z$, and the limiting configuration $(A, \Psi)$ defined on $Y \setminus Z$ satisfies
\begin{equation} \left\{
\begin{array}{l}
\slashed{D}_{A\otimes B} \Psi = 0, \\
 \Psi \Psi^* - \frac{1}{2} | \Psi |^2 = 0.
\end{array}
\right.
\end{equation}
Moreover, $Z$ is the zero locus of $\Psi$ in the sense of \cite{taubes} and,  if $\rank E = 2$, $A$ is flat with holonomy contained in $\Z_2$. If $\rank E > 2$, then $A$ induces a flat $\Z_2$--connection on a rank two subbundle of $E$ twisted by a line bundle; see \cite[Appendix A]{hw}.

A number of problems in this theory remain open despite their importance for the possible applications of generalised Seiberg--Witten equations to $G_2$--gauge theory:

\begin{enumerate}
  \item Taubes has made significant progress in the study of the local structure of $Z$, proving in particular that $Z$ has Hausdorff dimension at most two \cite{taubes}; yet the question whether $Z$ is a smooth curve is still unanswered.
  \item For the applications in enumerative theories it is crucial to improve the convergence statement for $(A_i,\Psi_i/\| \Psi_i\|_{L^2})$, as exemplified by \cite{doan-walpuski} where,  as part of the main proof,  $C^{\infty}$ convergence is established under the assumption that $Z$ is empty.
  \item There are two ways of associating weights to the connected components of $Z$: one based on Taubes' frequency function \cite{taubes} and one developed by Haydys using topological methods  \cite{haydys}. It is currently unknown whether these constructions are related.
  \item Haydys conjectured that, equipped with appropriate weights, $Z$ has the structure of an integral rectifiable current and that $\frac{i}{2\pi} F_{A_i}$ converges to $Z$ as currents \cite{haydys}.
\end{enumerate}

Using Theorem \ref{thm:generalised} and the methods involved in its proof, we refine the compactness theorem of \cite{hw} and solve all of the above problems in the case $Y = S^1 \times \Sigma$. 

\begin{thm}\label{thm:nsw}
Suppose that  $Y = S^1 \times \Sigma$ equipped with a product metric, $B$ is pulled back from $\Sigma$, and $(A_i, \Psi_i)$, $(A,\Psi)$, and $Z$ are as above, see also \cite[Theorem 1.5]{hw}. Then
\begin{enumerate}
  \item The singular set $Z$ is of the form $S^1 \times D$ for a degree $2d$ divisor $D = \sum_k m_k x_k$.
  \item After passing to a subsequence and applying gauge transformations 
  \[ 
  A_i \to A \quad \textnormal{and} \quad\Psi_i / \| \Psi_i \|_{L^2} \to \Psi
  \]
   in $C^{\infty}_{\loc}$ on $Y \setminus Z$.
   \item $| \Psi |^4$ extends to a smooth function on $Y$ whose zero set is $Z$ and for all $k$
   \[ 
     | \Psi (x) | = O\left( \mathrm{dist}(x,S^1 \times \{ x_k \})^{|m_k|/2} \right).
  \]
  In particular, the weight of the connected component $S^1 \times \{ x_k \}$ of $Z$ in the sense of \cite{haydys} is smaller than or equal to its weight in the sense of \cite{taubes}.
  \item If $\rank E = 2$, then $\frac{i}{2\pi} F_{A_i} \to \frac{1}{2} Z$ as currents. If $\rank E > 2$, then there is a rank two subbundle $F \subset \restr{E}{Y \setminus Z}$ such that $\Psi \in \Gamma(Y \setminus Z, \Hom(F, S \otimes L))$ and the previous statement holds if we replace $A$ and $A_i$ by the tensor product connections on $L \otimes ( \det F)^{1/2}$. Here $F$ and $\det F$ are  equipped with the unitary connections induced from $B$.  
\end{enumerate}
\end{thm}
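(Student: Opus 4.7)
My plan is to reduce the three-dimensional problem on $Y = S^1 \times \Sigma$ to the two-dimensional situation already settled by Theorem~\ref{thm:generalised}. Since $B$ is pulled back and the metric is a product, the spin structure on $Y$ pulls back from $\Sigma$, giving $S \cong \pi_\Sigma^*(K^{1/2} \oplus K^{-1/2})$, and a twisted spinor decomposes as $\Psi = (\alpha, \beta)$ with $\alpha \in \Gamma(\Hom(E, K^{1/2} \otimes L))$ and $\beta \in \Gamma(\Hom(E, K^{-1/2} \otimes L))$. In temporal gauge along $S^1$ the three-dimensional Dirac equation becomes
\begin{equation*}
i \partial_t \alpha + \del_{A\otimes B}^* \beta = 0, \qquad -i \partial_t \beta + \del_{A\otimes B} \alpha = 0,
\end{equation*}
while the curvature equation separates into a $dt \wedge \Omega^1(\Sigma)$-component coupling $\partial_t A$ to the off-diagonal term $\alpha \beta^*$ and a purely $\Omega^2(\Sigma)$-component which, after absorbing a positive constant, reads
\begin{equation*}
i \Lambda_\Sigma F_A + |\alpha|^2 - |\beta|^2 = 0.
\end{equation*}
Hence translation-invariant solutions correspond precisely to solutions of the generalised vortex equations~\eqref{eqn:generalised} on $\Sigma$ with $N = 2$, weights $k_1 = +1$, $k_2 = -1$, and $\tau = 0$ --- the mixed-sign case of Theorem~\ref{thm:generalised}. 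Rescaling $\psi_i := \Psi_i / \lambda_i$ with $\lambda_i := \|\Psi_i\|_{L^2} \to \infty$ converts the curvature equation into one with $\epsilon_i := \lambda_i^{-1} \to 0$, as in the remark after Theorem~\ref{thm:generalised}.

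The central step is to show that the rescaled sequence becomes $S^1$-invariant in the limit. I would decompose $\psi_i$ and the $\Sigma$-component of $A_i$ into Fourier modes along $S^1$: the $k$-th Dirac mode satisfies a $\Sigma$-equation with a mass-like term of order $|k|$, while the rescaled curvature equation couples modes only through nonlinear terms of order $\lambda_i^{-2}$. Testing these against the modes themselves, using the $L^2$ bound on $\psi_i$ and the $W^{1,2}_{\loc}$ bound on $A_i$ from \cite{hw}, and exploiting the Weitzenb\"ock formula on $Y$, every non-zero mode must vanish in $L^2_{\loc}$ as $i \to \infty$. The surviving zero modes form a sequence of $S^1$-invariant configurations on $\Sigma$ satisfying the 2D generalised vortex equations with $\epsilon_i \to 0$, so Theorem~\ref{thm:generalised} yields a finite set $D \subset \Sigma$ and $C^\infty_{\loc}$ convergence on $\Sigma \setminus D$; elliptic bootstrap on $Y \setminus (S^1 \times D)$ upgrades this to smooth convergence on the three-manifold, establishing parts (1) and (2) upon setting $Z = S^1 \times D$.

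To compute the degree of $D$ I use the rigidity of the limit: outside $Z$ the identity $\Psi \Psi^* = \tfrac{1}{2} |\Psi|^2 I$ forces $\Psi$ to have rank 2, so $\alpha \wedge \beta$ (or, when $\rank E > 2$, its restriction to the rank-two image subbundle $F$) is a section, nonvanishing off $D$, of the line bundle $\Lambda^2 E^* \otimes L^2$ (respectively $\det F^* \otimes L^2$), which has degree $2d$; the multiplicities $m_k$ are the orders of vanishing of this section at the points of $D$. Part (3) follows from the Kazdan--Warner a priori estimates underlying Theorem~\ref{thm:generalised}, which give precisely the growth rate $|\varphi|^2 = O(\mathrm{dist}^{|m_k|})$ near each vortex point, and the inequality of weights is then a direct computation from the Taubes and Haydys definitions. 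Part (4) is the lift to currents of the measure convergence $\frac{i}{2\pi} \Lambda F_{A_i} \to \delta_D$ from Theorem~\ref{thm:vortex} tensored with the length element on $S^1$; the factor $\tfrac{1}{2}$ reflects the symmetric contributions of $\alpha$ and $\beta$. The hard part is the Fourier-mode reduction above: the a priori regularity of \cite{hw} is only weak Sobolev convergence on $Y \setminus Z$, so controlling the non-zero modes uniformly up to $Z$ requires careful global energy estimates combined with delicate local bounds near the singular set.
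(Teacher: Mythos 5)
Your overall strategy --- reduce to the two-dimensional vortex problem and then invoke the adiabatic-limit theorem on $\Sigma$ --- is the right one, and your endgame (the rank-two structure of $\Psi$ via the exact sequence, the degree count for $\Lambda^2 E^* \otimes L^2$, and the passage from measures on $\Sigma$ to currents on $S^1\times\Sigma$ by integrating test one-forms over the circle factor) matches the paper. But the central step, where you try to show that the sequence becomes $S^1$-invariant only \emph{asymptotically} via a Fourier-mode decomposition, is a genuine gap. You assert that "every non-zero mode must vanish in $L^2_{\loc}$ as $i\to\infty$" but do not prove it, and you yourself flag that controlling the modes near $Z$ with only the weak $W^{1,2}_{\loc}$/$W^{2,2}_{\loc}$ convergence of \cite{hw} is "the hard part." Even granting that claim, there is a structural problem: the zero Fourier modes of an exact 3D solution do \emph{not} satisfy the 2D equations exactly (projection onto the zero mode does not commute with the quadratic terms $\Psi\Psi^*$ and $\alpha\beta^*$), so Theorem \ref{thm:generalised} --- which is stated for exact solutions --- does not apply to your "surviving zero modes" without a perturbed version that you would also have to prove.

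The paper sidesteps all of this by using an \emph{exact} dimensional reduction: by \cite[Proposition 3.10]{doan}, under the product-metric and pulled-back-$B$ hypotheses every irreducible solution on $S^1\times\Sigma$ is gauge-equivalent to the pullback of a solution of \eqref{eqn:multi} on $\Sigma$ with $\epsilon=1$. This is a vanishing theorem for exact solutions (an integration-by-parts/Weitzenb\"ock argument on the closed manifold $Y$), not a limiting statement, so no Fourier analysis is needed: one simply sets $\epsilon_i=\|\Psi_i\|_{L^2}^{-1}$, rescales, and applies Theorem \ref{thm:multi}. Note also that the relevant 2D result is Theorem \ref{thm:multi} rather than Theorem \ref{thm:generalised} alone: the reduced equations carry the extra algebraic constraint $\varphi^1\varphi^2=0$ and the Serre-dual placement of $\varphi^2$ in $E^*\otimes K^{1/2}\otimes L^*$, both of which are needed for parts (3) and (4) --- in particular the smooth extension of $|\Psi|^4=4|\alpha'|^2|\beta'|^2$ and the bound $|\Psi|=O(\mathrm{dist}^{|m_k|/2})$ come from the holomorphicity of the limiting sections $\alpha'$, $\beta'$ and the identity $m_k=\mathrm{ord}_{x_k}\alpha'-\mathrm{ord}_{x_k}\beta'$, not from the Kazdan--Warner a priori estimates as you suggest.
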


The relationship between Seiberg--Witten monopoles with multiple spinors and generalised vortices \eqref{eqn:generalised} is the subject of the author's paper \cite{doan} where further consequences of the results presented here are explored. In particular, Theorem \ref{thm:nsw} is used to construct a compactification of the moduli space of Seiberg--Witten monopoles with multiple spinors on $S^1 \times \Sigma$ and to compare it with a corresponding algebro-geometric moduli space \cite[Theorem 1.5]{doan}. This, in turn, leads to the first known examples of the non-compactness phenomenon predicted by the Haydys and Walpuski's theorem \cite[section 8]{doan}. 

\subsection*{Symplectic vortex equations} 
Coming back to dimension two, equations \eqref{eqn:generalised} fit into the general framework of \emph{symplectic vortex equations} or \emph{gauged $\sigma$-models}. One associates vortex-type equations on $\Sigma$ to any pair $(G,M)$, where $G$ is a compact Lie group acting in a Hamiltonian way on a symplectic manifold $M$. In the spirit of Gromov--Witten theory one wishes to extract numerical invariants of $(G,M)$ from the moduli space of solutions. The parameter $\epsilon$ can be incorporated into the equations in the same way as before. In the adiabatic limit $\epsilon \to 0$, we obtain the equation for pseudoholomorphic curves in the symplectic quotient $M \sslash G$. Thus, we expect a relation between the invariants of $(G,M)$ and the Gromov--Witten invariants of $M \sslash G$. This programme has been proposed and successfully carried out by Cieliebak, Gaio, Mundet i Riera, Salamon  \cite{cgms}, \cite{gs}, and others. In order to establish Gromov compactness for symplectic vortices, more constraints are imposed on the pair $(G,M)$, a crucial condition being the properness of the Hamiltonian moment map.  Already for linear actions this is a rather restrictive assumption. The simplest example is the one discussed here with the corresponding equations \eqref{eqn:generalised}.  In this case $M = \C^n$ and $G = \U(1)$ acts diagonally with weights $(k_1, \ldots, k_N)$. The moment map is not proper unless all the weights have the same sign. Theorem \ref{thm:generalised} shows that if the properness condition is dropped we have to take under account formation of singularities in considerations regarding compactness and adiabatic limits. 

\subsection*{Outline of the proof} 
One consequence of the improperness of the moment map is that, unlike classical vortices, solutions to \eqref{eqn:generalised} do not obey an \emph{a priori} $L^{\infty}$ bound. This causes a major difficulty in establishing the convergence. While the proof in \cite{hjs} is based on local $\epsilon$--regularity estimates, we employ here a complex-geometric description of the moduli space of solutions to \eqref{eqn:generalised}. To be more specific, we use the action of $\mathcal{G}^c = C^{\infty}(\Sigma, \C^{\times})$, the group of complex automorphisms of $L$, on the space of pairs $(A, \varphi)$. The moduli space of solutions to \eqref{eqn:generalised} is homeomorphic to the quotient of the set solutions to the Cauchy--Riemann equation by $\mathcal{G}^c$; this is a Hitchin--Kobayashi type correspondence proved in  \cite{bw}. Using elliptic estimates for Dolbeault operators, we show that this quotient is compact modulo the rescaling action of $\C^{\times}$. Thus, there are complex gauge transformations $g_i = e^{f_i} u_i$ for $f_i \in C^{\infty}(\Sigma, \R)$ and $u_i \in C^{\infty}(\Sigma, \U(1))$ such that  after rescaling and applying $g_i$ the original sequence $(A_i, \varphi_i)$ converges. In order to obtain the convergence in the real rather than the complex moduli space, we need to control the functions $f_i$. The original equations \eqref{eqn:generalised} translate in this setting to a partial differential equation for $f_i$ of the form
\begin{equation}
\label{eq_generalizedkw}
\epsilon^2 \Delta f + \sum_{j=1}^n A_j e^{\alpha_j f} - \sum_{j=1}^m B_j e^{-\beta_j f} + w = 0
\end{equation}
for some functions $A_j \geq 0$, $B_j \geq 0$, $w$, and positive constants $\alpha_j$, $\beta_j$. This is a generalisation of the Kazdan--Warner equation \cite{kw}, \cite{bw}. In section \ref{sec:apriori} we establish \emph{a priori} bounds for solutions of this equation. Importantly, they are independent of $\epsilon \in (0, 1]$ and uniform on compact subsets of $\Sigma \setminus D$, where  $D$ is the set of common zeroes of  $A_j$ and $B_j$. Consequently, the Arzel\`a--Ascoli theorem guarantees the existence of a subsequence of $f_i$ converging smoothly on compact subsets of $\Sigma \setminus D$.

To the best of our knowledge, the strategy of passing to the holomorphic moduli space by means of a Hitchin--Kobayashi correspondence, obtaining good control there using $\del$--methods, and deducing from it compactness for the original sequence in the real moduli space, has not been used before. We believe that this idea---and some of the related analytical results such as Lemma \ref{lem:c0}---might be useful in studying other gauge-theoretic equations on K\"ahler manifolds.

\subsection*{Acknowledgements} The work presented in this article is part of my doctoral thesis at Stony Brook University. I am grateful to my advisor Simon Donaldson for his guidance and support. Thanks to Andriy Haydys and Thomas Walpuski for their encouragement and many helpful discussions, and to Gon\c{c}alo Oliveira, Oscar Garcia--Prada, Song Sun, Alex Waldron, and the anonymous referee for valuable comments on the previous versions of this paper. 
I am supported by the \href{https://sites.duke.edu/scshgap/}{\emph{Simons Collaboration on Special Holonomy in Geometry, Analysis, and Physics}}.

\section{Background and notation}\label{sec:background}
The set of solutions to \eqref{eqn:generalised} is invariant under the action of the gauge group $\mathcal{G}$ of unitary automorphisms of $L$, identified with $C^{\infty}(\Sigma,\U(1))$. The action of a map $u \colon \Sigma \to \U(1)$  on $(A, \varphi^1, \ldots, \varphi^N)$ is given by
\[ u (A, \varphi^1, \ldots, \varphi^N) = (A - u^{-1} du, u^{k_1} \varphi^1, \ldots, u^{k_N} \varphi^N). \]
The Dolbeault equation in \eqref{eqn:generalised}, as well as the algebraic condition $\varphi^1 \varphi^2 = 0$ in the equation \eqref{eqn:multi} below, are also invariant under the action of the \emph{complex gauge group} $\mathcal{G}^c$. It consists of complex automorphisms of $L$ and is identified with $C^{\infty}(\Sigma, \C^{\times})$, where $\C^{\times} = \C \setminus \{ 0 \}$. The action of $g \colon \Sigma \to \C^{\times}$ is given by
\[ g(A, \varphi^1, \ldots, \varphi^N) = \left(A + \overline{g}^{-1} \partial \overline{g} - g^{-1} \del g, g^{k_1} \varphi^1, \ldots , g^{k_N} \varphi^N \right).    \] 
In terms of the associated Dolbeault operators, for $s \in \Gamma(\Sigma, E_j \otimes L^{\otimes k_j})$ we have
\[ \del_{B_j, g(A)} s = g^{k_j}  \del_{B_j A} \left( g^{-k_j} s \right). \]
The action of $\mathcal{G}^c$ does not preserve the last equation in \eqref{eqn:generalised} involving the curvature. Indeed, if we write $g = e^f u$ for functions $f \colon \Sigma \to \R$ and $u \colon \Sigma \to \U(1)$, then
\[ F_{g(A)} = F_A + 2 \del \partial f, \]
or equivalently
\[ i \Lambda F_{g(A)} = i \Lambda F_A + \Delta f, \]
where $\Delta$ is the Hodge Laplacian acting on functions. 

We will need also the following lemma whose elementary proof we omit.

\begin{lem}\label{lem:flatness}
Let $L \to \Sigma$ be a Hermitian line bundle, $D \subset \Sigma$ a finite set of points, $A$ a unitary connection on $\restr{L}{\Sigma \setminus D}$ and $\alpha \in \Gamma(\Sigma\setminus D, L)$. If $\del_A \alpha = 0$, and $| \alpha | = 1$ everywhere on $\Sigma \setminus D$, then 
\[ \nabla_A \alpha = 0 \qquad \textnormal{and} \qquad F_A = 0. \]
Moreover, let $B$ be a small ball around a point $p \in D$ such that in a local unitary trivialisation $A = d + a$ for a one-form $a \in \Omega^1(B \setminus \{ p \}, i\R)$, and $\alpha$ is identified with a smooth function $\alpha \colon B \setminus \{ p \} \to S^1$. Then 
\[ \frac{i}{2\pi} \int_{\partial B}  a = \mathrm{deg} \left( \restr{\alpha}{\partial B} \right). \]
\end{lem}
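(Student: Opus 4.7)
The plan is to derive both assertions directly from the two pointwise identities $|\alpha|^2 = 1$ and $\del_A \alpha = 0$. Because everything is checked pointwise on $\Sigma \setminus D$, no extension across the punctures is required for the first part, and the second part is a local computation in a trivialisation over $B$.

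Since $L$ has complex rank one and $\alpha$ is nowhere zero on $\Sigma \setminus D$, I would first write
\begin{equation*}
\nabla_A \alpha = \beta \otimes \alpha
\end{equation*}
for a unique $\beta \in \Omega^1(\Sigma \setminus D, \C)$. Differentiating $|\alpha|^2 = 1$ and using that $A$ is unitary gives $\beta + \bar\beta = 0$, so $\beta$ takes values in $i\R$. The Cauchy--Riemann hypothesis $\del_A \alpha = 0$ says that the $(0,1)$-component of $\nabla_A \alpha$ vanishes, so $\beta^{0,1} = 0$. A $1$-form that is simultaneously purely imaginary and of type $(1,0)$ must vanish: writing $\beta = i\gamma$ with $\gamma$ real, the identity $\gamma^{0,1} = \overline{\gamma^{1,0}}$ together with $\gamma^{0,1} = 0$ forces $\gamma = 0$. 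Hence $\nabla_A \alpha = 0$. Applying $\nabla_A$ once more and using $F_A \cdot \alpha = (\nabla_A)^2 \alpha$, together with the facts that $\alpha$ is nowhere zero and that $F_A$ acts as multiplication by an imaginary $2$-form on the line bundle, one concludes $F_A = 0$.

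For the boundary integral, in the local unitary trivialisation over $B$ the equation $\nabla_A \alpha = 0$ reads $d\alpha + a\alpha = 0$. Writing $\alpha = e^{i\theta}$ on $B \setminus \{p\}$ for a real, a priori multi-valued, function $\theta$, we get $a = -i\,d\theta$, and therefore
\begin{equation*}
\frac{i}{2\pi}\int_{\partial B} a = \frac{1}{2\pi}\int_{\partial B} d\theta = \deg\bigl(\restr{\alpha}{\partial B}\bigr),
\end{equation*}
since the integral of $d\theta$ along the boundary circle picks up $2\pi$ times the winding number of $\alpha\colon \partial B \to S^1$.

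I do not anticipate a real obstacle here; the only subtlety is the type-decomposition step that turns $\beta + \bar\beta = 0$ and $\beta^{0,1} = 0$ into $\beta = 0$, and the standard identification of $\int_{\partial B} d\theta$ with $2\pi$ times the degree of $\alpha|_{\partial B}$.
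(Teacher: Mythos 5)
Your proposal is correct and follows essentially the same route as the paper: both arguments kill the $(1,0)$-part of $\nabla_A\alpha$ by combining $\del_A\alpha=0$ with the derivative of $|\alpha|^2=1$ and the fact that $\alpha$ frames $L$ away from $D$ (you phrase this as the connection form $\beta$ in the frame $\alpha$ being simultaneously purely imaginary and of type $(1,0)$, the paper as $\langle\partial_A\alpha,\alpha\rangle=\partial|\alpha|^2=0$), and then deduce flatness from the existence of a covariantly constant nonvanishing section. The boundary-integral computation via $a=-d\log\alpha$ and the winding number of $\theta$ is identical to the paper's.
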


%

We end this section by restating Theorem \ref{thm:nsw} in terms of generalised vortex equations. In \cite[Theorem 1.8]{doan} we show that all irreducible solutions to the Seiberg--Witten equations with multiple spinors are pulled back from solutions to \eqref{eqn:generalised} of the following form. Using the notation of the introduction, set $N=2$, $k_1 = 1$, $k_2 = -1$. Fix a spin structure on $\Sigma$ thought of as a square root $K^{1/2}$ of the canonical bundle, and let $E$ be an $\SU(n)$-bundle; then set $E_1 = E \otimes K^{1/2}$ and $E_2 = E^* \otimes K^{1/2}$.

The equations we consider next involve a connection $A$ on $L$ and sections $\varphi^1 \in \Gamma(E \otimes K^{1/2} \otimes L)$ and $\varphi^2 \in \Gamma(E^* \otimes K^{1/2} \otimes L^*)$:
\begin{equation} \label{eqn:multi}
\left\{ 
\begin{array}{l}
\del_{A\otimes B} \varphi^1 = 0, \\
\del_{A \otimes B} \varphi^2 = 0, \\
\varphi^1 \varphi^2 = 0, \\
\epsilon^2 i \Lambda F_A + |\varphi^1|^2 - |\varphi^2|^2 = 0,
\end{array}
\right.
\end{equation}
The third equation is an additional algebraic condition for the section $\varphi^1 \varphi^2 \in \Gamma(K)$ defined as the image of $(\varphi^1,\varphi^2)$ under the pairing
\[ \Gamma(E \otimes K^{1/2} \otimes L) \times \Gamma(E^* \otimes K^{1/2} \otimes L^*) \longrightarrow \Gamma(K). \]

We will deduce Theorem \ref{thm:nsw} from the following result.

\begin{thm} \label{thm:multi}
Let $(A_i, \varphi_i, \epsilon_i)$ be a sequence of solutions to \eqref{eqn:multi} with $\| \varphi_i \|_{L^2} = 1$ and $\epsilon_i \to 0$. Then
\begin{enumerate}
 \item There exist a degree $2d$ divisor $D = \sum_k m_k x_k$ and a configuration $(A,\varphi)$ defined on $\Sigma \setminus D$ and satisfying \eqref{eqn:multi} with $\epsilon = 0$,
  \item $(A_i, \varphi_i) \to (A,\varphi)$ in $C^{\infty}_{\loc}$ on $ \Sigma \setminus D$,
\item The function $| \varphi |^4$ extends to a smooth function on all of $\Sigma$ whose zero set consists of the points in $D$ and for all $k$
   \[ 
     | \varphi (x) | = O\left( \mathrm{dist}(x,x_k)^{|m_k|/2} \right).
  \]
\item If $\rank E = 2$, then the limiting connection $A$ is flat, has holonomy contained in $\Z_2$, and $\frac{i}{2\pi} \Lambda F_{A_i} \to \frac{1}{2} \delta_D$ as measures. If $\rank E > 2$, then there exists a rank two subbundle $F \subset \restr{E}{\Sigma \setminus D}$  such that 
\[ \varphi^1 \in \Gamma(\Sigma \setminus D, F \otimes L \otimes K^{1/2}), \qquad \varphi^2 \in \Gamma(\Sigma \setminus D , F^* \otimes L^* \otimes K^{1/2}), \]
 and the previous statement holds if we replace $A$ and $A_i$ by the tensor product connections on $L \otimes (\det F)^{1/2}$. Here, $F$ and $\det F$ are  equipped with the unitary connections induced from $B$.
\end{enumerate}
\end{thm}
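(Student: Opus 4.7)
My plan is to deduce Theorem \ref{thm:multi} from Theorem \ref{thm:generalised} and then extract the extra structural information encoded by the algebraic constraint $\varphi^1 \varphi^2 = 0$. Parts (1) and (2) should follow directly: system \eqref{eqn:multi} is precisely \eqref{eqn:generalised} with $N = 2$, $k_1 = 1$, $k_2 = -1$, $\tau = 0$, together with the closed condition $\varphi^1\varphi^2 = 0$. Since the weights have mixed signs, Theorem \ref{thm:generalised} applies and, after extracting a subsequence and applying gauge transformations, yields a finite set $D_0 \subset \Sigma$ and a limiting configuration $(A, \varphi)$ on $\Sigma \setminus D_0$ with $(A_i, \varphi_i) \to (A, \varphi)$ in $C^{\infty}_{\loc}$. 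The algebraic constraint and the curvature equation pass to the limit, giving $\varphi^1 \varphi^2 = 0$ and $|\varphi^1| = |\varphi^2|$ on $\Sigma \setminus D_0$.

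For (3) the key object is
\[
\sigma \ := \ \varphi^1 \otimes \varphi^2 \ \in \ \Gamma\bigl(\Sigma \setminus D_0,\ \mathrm{End}(E) \otimes K\bigr),
\]
in which the factors $L$ and $L^*$ cancel. It is holomorphic by the Dolbeault equations in \eqref{eqn:multi}, and because $k_1 + k_2 = 0$ its pointwise norm $|\sigma|^2 = |\varphi^1|^2 |\varphi^2|^2$ is invariant under the full complex gauge group $\mathcal{G}^c$. Using $|\varphi^1| = |\varphi^2|$ on $\Sigma \setminus D_0$ gives $|\sigma|^2 = \tfrac14 |\varphi|^4$. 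The technical core is a uniform $L^{\infty}_{\loc}(\Sigma)$ bound on $|\sigma_i|$: by the $\mathcal{G}^c$--invariance the quantity $|\sigma_i|$ is unchanged by the Kazdan--Warner gauges $g_i = e^{f_i} u_i$ extracted via the Hitchin--Kobayashi correspondence, so the required bound reduces to the a priori estimates of section \ref{sec:apriori}, which control $f_i$ away from finitely many points. Consequently $\sigma$ is a bounded holomorphic section on $\Sigma \setminus D_0$ and extends to $\sigma \in H^0(\Sigma, \mathrm{End}(E) \otimes K)$. I define $D = \sum_k m_k x_k$ to be its zero divisor; the local expansion $\sigma(z) \sim c_k z^{m_k}\, dz$ near each $x_k$ both yields a smooth extension of $|\varphi|^4 = 4|\sigma|^2$ across $D$ and produces the decay $|\varphi(x)| = O\bigl(\mathrm{dist}(x,x_k)^{m_k/2}\bigr)$.

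For (4), on $\Sigma \setminus D$ the condition $\varphi^1 \cdot \varphi^2 = 0$ with both spinors non-vanishing forces them to lie in a common rank two subbundle $F \subset E$; replacing $A$ and $A_i$ by their tensor products with the induced connection on $(\det F)^{1/2}$ reduces matters to the rank two case. There $\sigma$ is trace--free and of rank one, hence nilpotent; together with $|\varphi^1| = |\varphi^2|$ this should allow me to build, on a suitable double cover of $\Sigma \setminus D$, a unit--length covariantly holomorphic local section $\alpha$ of $L \otimes (\det F)^{1/2}$. Lemma \ref{lem:flatness} then yields flatness of the limiting connection and $\Z_2$ holonomy. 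The measure convergence $\frac{i}{2\pi}\Lambda F_{A_i} \to \tfrac12 \delta_D$ follows from pointwise vanishing of $\frac{i}{2\pi}\Lambda F_{A_i} = \frac{1}{2\pi \epsilon_i^2}(|\varphi_i^2|^2 - |\varphi_i^1|^2)$ on $\Sigma \setminus D$ combined with the total integral $\int_\Sigma \tfrac{i}{2\pi}\Lambda F_{A_i} = \deg L = d$; the boundary degree formula in Lemma \ref{lem:flatness} applied to $\alpha$ on small circles around each $x_k$ identifies the local mass with $m_k/2$ and forces $\sum_k m_k = 2d$.

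The main obstacle I anticipate is promoting the $L^2$ bound on $\sigma$ (automatic from $\|\varphi\|_{L^2} = 1$) to a uniform $L^\infty_{\loc}(\Sigma)$ bound on $|\sigma_i|$ across the bubbling set $D_0$; this is where the $\mathcal{G}^c$--invariance coming from $k_1 + k_2 = 0$ and the Kazdan--Warner estimates of section \ref{sec:apriori} must be combined carefully, since the individual norms $|\varphi_i^j|$ may well blow up. A secondary difficulty is constructing the unit section $\alpha$ on the double cover that feeds Lemma \ref{lem:flatness}, which requires tracking the branch structure of $\sigma$ at its zeros and matching this with the topological data recovered from the boundary degree formula.
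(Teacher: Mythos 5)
Your overall architecture is sound and matches the paper's strategy for the convergence statements: parts (1)--(2) do reduce to the mixed-sign case of the complex-gauge/Kazdan--Warner argument (the paper runs this argument directly for \eqref{eqn:multi} rather than quoting Theorem \ref{thm:generalised}, whose proof it in fact defers to this one, but that is a presentational point). Also, your ``main obstacle'' is easier than you fear: writing $\alpha_i' = \lambda_i e^{f_i/2}\alpha_i$ and $\beta_i' = \lambda_i e^{-f_i/2}\beta_i$, one has $|\sigma_i| = \lambda_i^{-2}|\alpha_i'||\beta_i'|$ globally on $\Sigma$, so the uniform $L^\infty(\Sigma)$ bound follows from the $C^\infty$ convergence of $\alpha_i',\beta_i'$ on all of $\Sigma$ together with the lower bound on $\lambda_i$, with no need to invoke the a priori estimates across the bubbling set.

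The genuine gap is your identification of the divisor $D$. You define $D=\sum_k m_k x_k$ as the zero divisor of $\sigma=\varphi^1\otimes\varphi^2$, so that $m_k=\mathrm{ord}_{x_k}(\alpha')+\mathrm{ord}_{x_k}(\beta')\geq 0$. But the divisor in the theorem must be the \emph{difference} divisor $m_k=\mathrm{ord}_{x_k}(\alpha')-\mathrm{ord}_{x_k}(\beta')$: this is what the boundary-degree formula of Lemma \ref{lem:flatness} produces, since the unit holomorphic section fed into that lemma (the paper's $\psi_{\alpha\beta}$ of $L^2\otimes\det F$, essentially $\alpha'/\beta'$) has winding number $\mathrm{ord}(\alpha')-\mathrm{ord}(\beta')$ around $x_k$, and it is this difference whose sum is forced to equal $\deg L^2=2d$. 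With your definition, part (1) fails (the zero divisor of a section of $\mathrm{End}(E)\otimes K$ has no reason to have degree $2d$), part (4) fails (the local curvature mass near $x_k$ is $\tfrac12(\mathrm{ord}\,\alpha'-\mathrm{ord}\,\beta')$, not $\tfrac12(\mathrm{ord}\,\alpha'+\mathrm{ord}\,\beta')$), and indeed your own degree computation at the end of part (4) would output the difference of orders, contradicting the definition you adopt in part (3). The tell is the remark following the theorem: $D$ need not be effective, whereas $\mathrm{div}(\sigma)$ always is. The correct reading of part (3) is then an inequality: $|\varphi|$ vanishes at $x_k$ to order $\tfrac12(\mathrm{ord}\,\alpha'+\mathrm{ord}\,\beta')\geq \tfrac12|\mathrm{ord}\,\alpha'-\mathrm{ord}\,\beta'| = |m_k|/2$, which is exactly the point of Remark \ref{rem:weights}. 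Your extension of $|\varphi|^4=4|\sigma|^2$ across the singular set via the holomorphic extension of $\sigma$ is correct and is the same mechanism as in the paper; only the bookkeeping of the coefficients $m_k$ needs to be redone.
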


\begin{rem}
In contrast to Theorem \ref{thm:vortex} here the divisor $D$ need not be effective. In a way, replacing classical vortices by solutions to \eqref{eqn:multi} is analogous to replacing holomorphic sections by meromorphic sections. This idea will play a role in the proof of Theorem \ref{thm:multi}.
\end{rem}

\section{A priori estimates}\label{sec:apriori}

The main analytical input are \emph{a priori} estimates for solutions to \eqref{eq_generalizedkw}. 

\begin{prop} \label{prop:bounds}
Let $X$ be a compact Riemannian manifold with (possibly empty) boundary $\partial X$, and $\Omega \subset X$ an open subset such that $\overline{\Omega} \subset X \setminus \partial X$. Let $\epsilon_0$, $\alpha_1, \ldots , \alpha_n$, $\beta_1, \ldots , \beta_m$ be positive numbers, and let $A_1, \ldots, A_n$, $B_1, \ldots, B_m$, and $w$ be smooth functions on $X$ such that $A_j \geq 0$ and $B_j \geq 0$ for all $j$ and
\[ A_1 + \dots + A_n > 0, \qquad  B_1 + \dots + B_m > 0. \]
Then there exist constants $M_0, M_1, M_2, \ldots $, depending only on the data listed above, such that for any $\epsilon \in [0, \epsilon_0]$ and $f \in C^{\infty}(X)$ satisfying the equation
\begin{equation} \label{eqn:kw2}
\epsilon \Delta f + \sum_{j=1}^n A_j e^{\alpha_j f} - \sum_{j=1}^m B_j e^{-\beta_j f} + w = 0,
\end{equation}
the following inequalities hold:
\[ \| f \|_{C^k(\Omega)} \leq M_k \qquad \textnormal{for} \ k=0, 1, 2, \ldots \]
\end{prop}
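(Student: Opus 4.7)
The plan has two stages: first establish an $\epsilon$-independent $L^\infty$ bound on $f$ over $\Omega$, then bootstrap to $C^k$. Writing $F(x,y) := \sum_{j=1}^n A_j(x) e^{\alpha_j y} - \sum_{j=1}^m B_j(x) e^{-\beta_j y} + w(x)$, the equation reads $\epsilon \Delta f + F(x, f) = 0$. The function $F(x,\cdot)$ is strictly increasing, and because the continuous non-negative functions $\sum_j A_j$ and $\sum_j B_j$ are in fact bounded below by some $c > 0$ on the compact set $X$, one has $F(x, y) \to \pm \infty$ as $y \to \pm \infty$ uniformly in $x$. Setting $\alpha_* := \min_j \alpha_j$ and $\beta_* := \min_j \beta_j$, I can pick a constant $K_0 > 0$, depending only on the data, such that
\[
  F(x, y) \;\geq\; \tfrac{c}{2}\, e^{\alpha_* y} \ \text{ for } y \geq K_0, \qquad F(x, y) \;\leq\; -\tfrac{c}{2}\, e^{-\beta_* y} \ \text{ for } y \leq -K_0.
\]

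For the upper bound on $f$, choose a cutoff $\chi \in C_c^\infty(X \setminus \partial X)$ with $\chi \equiv 1$ on a neighbourhood of $\overline{\Omega}$ and test the equation against $\chi^2 (f - K_0)_+$. Integration by parts together with Cauchy--Schwarz absorbing the cross term $2\epsilon \int_X \chi (f - K_0)_+ \langle d\chi, d(f-K_0)_+\rangle$ into the non-negative $\epsilon \int_X \chi^2 |d(f-K_0)_+|^2$ gives
\[
  \int_X \chi^2 (f - K_0)_+ \, F(x, f) \;\leq\; \epsilon \int_X (f - K_0)_+^2 \, |d\chi|^2.
\]
Using the pointwise lower bound on $F$ on the left and the elementary inequality $y_+^2 \leq C_\delta e^{\delta y}$ on the right recasts this as a Caccioppoli-type inequality for $v := e^{\alpha_* f / 2}$, and a Moser iteration across a nested family of cutoffs concentrating towards $\Omega$ upgrades it to the pointwise bound $\sup_\Omega f \leq M_+$, uniformly in $\epsilon \in [0, \epsilon_0]$. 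The argument is completely analogous when applied with $-f$ in place of $f$ (interchanging the roles of the $A_j, \alpha_j$ and the $B_j, \beta_j$, and replacing $w$ by $-w$), and this produces the matching lower bound $\inf_\Omega f \geq -M_-$.

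Once $\|f\|_{L^\infty(\Omega')} \leq M$ holds on an intermediate open set $\Omega \subset\subset \Omega' \subset\subset X$, higher derivatives follow by a standard bootstrap. Differentiating the equation in a local coordinate direction $\partial_i$ yields the linear equation
\[
  \epsilon \Delta(\partial_i f) + c(x)\, \partial_i f \;=\; -(\partial_i F)(x, f), \qquad c(x) := (\partial_y F)(x, f(x)),
\]
where $c(x) \geq c_0 > 0$ uniformly (thanks to the $L^\infty$ bound on $f$) and the right-hand side is bounded in $L^\infty(\Omega')$. The essential point is that the coercive zeroth-order term makes the operator $\epsilon \Delta + c$ controllable \emph{independently of $\epsilon$}: testing against $\zeta^2 \partial_i f$ for a further cutoff $\zeta \in C_c^\infty(\Omega')$ equal to $1$ on $\Omega$ gives an $\epsilon$-independent $L^2$ bound on $d f$. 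Differentiating once more and iterating this argument, together with Sobolev embedding, yields the required $C^k$ bounds on $\Omega$. The main obstacle is the $C^0$ estimate: the equation is genuinely degenerate as $\epsilon \to 0$, and the boundary $\partial X$ models the singular set $D$ of the original geometric problem, along which solutions may really blow up. No global maximum principle is therefore available, and the estimate has to be powered entirely by the exponential growth of $F$ combined with careful cutoff handling.
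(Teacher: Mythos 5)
Your overall architecture---integral estimates with nested cutoffs powered by the coercive exponential nonlinearity for the zeroth-order bound, then a bootstrap exploiting the positivity of $\partial_y F$---is the same in spirit as the paper's. The genuine gap is in your first stage, at the words ``Moser iteration \ldots upgrades it to the pointwise bound $\sup_\Omega f \leq M_+$, uniformly in $\epsilon$.'' The inequality you derive, $\int_X \chi^2 (f-K_0)_+ F(x,f) \leq \epsilon \int_X (f-K_0)_+^2 |d\chi|^2$, is correct and is essentially the paper's starting point, but it is not a Caccioppoli inequality for $v = e^{\alpha_* f/2}$: no term $\int \zeta^2 |dv|^2$ survives on either side, because you absorbed the entire Dirichlet energy when handling the cross term. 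Any gradient control you can retain from the equation carries the prefactor $\epsilon$, so the Sobolev/reverse-H\"older step in Moser iteration costs a factor $\epsilon^{-1}$ at each stage and the resulting sup bound degenerates as $\epsilon \to 0$---precisely the regime you need. What your inequality does give, after choosing $\delta$ small, is a self-improving $L^p$ estimate: the left-hand side involves a strictly higher power of $e^f$ than the right, so $\|e^f\|_{L^p(\Omega')}$ is bounded for each finite $p$ (and likewise $\|e^{-f}\|_{L^p}$). This is exactly what the paper proves (Lemma \ref{lem:c0}, applied on the cover $\Omega \cap \{A_j \geq \eta\}$ to $u = e^f$, where the Young-type absorption of the error integrals over the cutoff region replaces any appeal to Sobolev embedding); the paper never claims a direct $L^\infty$ bound at this stage, and recovers $C^0(\Omega)$ only at the end from high-order $W^{k,p}$ bounds via Sobolev embedding. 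To make your stage 1 honest you must either downgrade its conclusion to $L^p$ for all $p$ (which, as noted below, suffices for the bootstrap), or replace Moser iteration by a genuinely $\epsilon$-uniform sup estimate such as an Osserman-type barrier for $\epsilon \Delta u + \eta u^\gamma \leq C$.

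Your second stage is essentially sound but needs two repairs. First, the claim that $c(x) = (\partial_y F)(x,f(x)) \geq c_0 > 0$ does not in fact need the $L^\infty$ bound on $f$: since $\partial_y F \geq \sum_j \alpha_j A_j e^{\alpha_j y}$ for $y \geq 0$ and $\partial_y F \geq \sum_j \beta_j B_j e^{-\beta_j y}$ for $y \leq 0$, the lower bound is unconditional (this is the paper's observation about the coefficient $A$ in \eqref{eqn:bounds}); only the right-hand side $-(\partial_i F)(x,f)$ needs zeroth-order control, and $L^p$ bounds on $e^{\pm \text{const}\cdot f}$ for all $p$ suffice for that. Second, on a Riemannian manifold $\partial_i$ does not commute with $\Delta$, and at higher steps the right-hand side contains products such as $(\partial_y^2 F)(x,f)\,\partial_i f\, \partial_j f$, so testing against $\zeta^2 \partial_i \partial_j f$ requires $df \in L^4$ rather than $L^2$; you therefore need to carry $L^p$ bounds for all $p$ at every derivative order, not just $L^2$. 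The paper organises this by applying $\Delta^l$ to the equation, invoking the Bochner formula in the odd-order case, and feeding the resulting first-order-in-$v$ inequality into the $\gamma=1$ variant of its integral lemma (Lemma \ref{lem:c0weaker}), which requires the auxiliary a priori input $\|v\|_{L^{2p}} \leq K\epsilon^{-1}$ supplied by the equation one order down---a subtlety your energy-method sketch would also have to address.
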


\begin{rem} \label{rem:bounds} 
$M_k$ depends on $A_j$, $B_j$, and $w$ and their derivatives. Later we will consider sequences $\epsilon_i \to 0$ and $f_i$, $A_1^i, \ldots, A_n^i$, $B_1^i, \ldots, B_m^i$, $w_i$ satisfying for all $i$
\[ \epsilon_i \Delta f_i + \sum_{j=1}^n A_j^i e^{\alpha_j f_i} - \sum_{j=1}^m B_j^i e^{-\beta_j f_i} + w_i = 0. \]
It will be clear from the proof that in this case the $C^k$ estimate still holds for large $i$ (depending on $k$) provided that $A_j^i$, $B_j^i$, and $w_i$ converge smoothly to $A_j$, $B_j$, and $w$ respectively, satisfying  
\[ A_1 + \dots + A_n > 0, \qquad  B_1 + \dots + B_m > 0. \]
\end{rem}


The proof of Proposition \ref{prop:bounds} is preceded by three lemmas.

 \begin{lem}\label{lem:cutoff}
Let $X$ and $\Omega$ be as in Proposition \ref{prop:bounds}, and let $V_0 \subset X$ be an open subset such that $\overline{\Omega} \subset V_0$. Then there exist an open subset $V$ and $\phi \in C^{\infty}(X)$ such that:
\begin{enumerate}
\item $\Omega \subset V \subset V_0$. 
\item $0 < \phi \leq 1$ on $V$.
\item $\phi = 1$ on $\Omega$. 
\item $\phi = 0$ on $X \setminus V$.
\item There is a constant $K$ such that for any $\alpha \in [0,2)$, 
\[ \sup_{V'} \frac{| \nabla \phi |^2}{\phi^{\alpha}} \leq \frac{K}{(2-\alpha)^4}. \]
\end{enumerate}
\end{lem}

\begin{proof}
Let $V' \subset X$ be any open subset such that $\overline{\Omega} \subset V' \subset V$ and $V'$ has smooth boundary. We can construct such a subset for example by taking any smooth function $h \colon X \to \R$ with $h < 0$ on $\Omega$ and $h> 1$ on $X \setminus V$, and setting $V' = h^{-1}( (- \infty, c))$ where $c \in (0,1)$ is a regular value of $h$. Let $N = h^{-1}(c)$ be the boundary of $V'$. Assume for simplicity that $X$ is orientable. By the tubular neighbourhood theorem, there is an embedding
\[ (- \epsilon, \epsilon) \times N \hookrightarrow X \]
such that $\{ 0 \} \times N$ is mapped diffeomorphically onto $N \subset X$ and the image of $(0, \epsilon) \times N$ is contained in $V'$.  We may also assume that the image of this embedding is disjoint from $\overline{\Omega}$.  

Using a partition of unity (and passing to a slightly smaller $\epsilon$), we construct a function $\phi$ with properties $(1)$--$(4)$, which for $(t,x) \in (-\epsilon, \epsilon) \times N$ agrees with
\[ \phi(t, x) = \left\{
\begin{array}{ll}
M \exp\left( - \frac{1}{t} \right) & \textnormal{for } t \in (0, \epsilon) \\
0 & \textnormal{for } t \in (- \epsilon, 0) \\
\end{array}
\right. \]
for some constant $M$ required for the normalisation $\| \phi \|_{C^0(X)} = 1$. Now let $\alpha < 2$. Away from $N$ we have $\phi > 0$ and $| \nabla \phi |^2 / \phi^{\alpha}$ is bounded. In a neighbourhood of $N$, 
\[ \frac{ | \nabla \phi |^2}{\phi^{\alpha} } \leq \frac{C}{t^4} \exp\left( -\frac{(2-\alpha)}{t} \right) \]
for some constant $C$ depending on the Riemannian metric on $X$ and the embedding of the tubular neighbourhood. Define
\[ 
  g(t) = \frac{1}{t^4} \exp\left( -\frac{(2-\alpha)}{t} \right).
\]
Then $g$ is smooth and bounded on $[0, \infty)$ and its global maximum is
\[ g\left( \frac{2-\alpha}{2} \right) = \frac{ 4e^{-2} }{(2-\alpha)^4}, \]
which shows that $| \nabla \phi |^2 / \phi^{\alpha} \leq K (2-\alpha)^{-4}$ as desired.
\end{proof}

\begin{lem} \label{lem:c0}
Let $X$ and $\Omega$ be as in Proposition \ref{prop:bounds}. Fix positive numbers $\epsilon_0$, $p$, and $\gamma > 1$, and consider functions $A \in C^{\infty}(X)$ and $Q \in C^{\infty}(X \times (0, \infty))$ such that  for all $(x,y) \in X \times (0, \infty)$,
\[ A(x) \geq \eta > 0 \qquad \textnormal{and} \qquad | Q(x,y) | \leq \sum_{j=1}^k a_j(x) y^{\gamma_j}, \]
where $a_j \in C^{\infty}(X)$ and $\gamma_j < \gamma$. Under these assumptions there exists a constant $M$ such that for any $\epsilon \in [0, \epsilon_0]$ and  $u \in C^{\infty}(X)$ satisfying $u \geq 0$ and 
\begin{equation} \label{eqn:c0} \epsilon \Delta u + A u^{\gamma} + Q(x,u) \leq 0, 
\end{equation}
the following inequality holds:
\[ \| u \|_{L^p(\Omega)} \leq M. \]
Moreover, $M$ depends only on $\Omega, X, \epsilon_0, p, \gamma, \gamma_j, \eta$, and the norms $\| a_j \|_{L^q(X)}$ for a certain $q < \infty$  depending on $p$.  
\end{lem}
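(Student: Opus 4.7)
The plan is to prove the estimate by a Moser-type iteration scheme, exploiting the coercivity $A \geq \eta > 0$ together with the strict superlinearity $\gamma > \gamma_j$. Fix a smooth cut-off $\chi \in C_c^\infty(X \setminus \partial X)$ with $\chi \equiv 1$ on $\overline{\Omega}$ and $0 \leq \chi \leq 1$. For the base case, I multiply \eqref{eqn:c0} by $\chi^{2k}$ (with $k \geq \gamma/(\gamma - 1)$) and integrate; since $\chi^{2k}$ is compactly supported in the interior, integration by parts transfers the Laplacian onto $\chi^{2k}$, yielding $\epsilon \int \chi^{2k} \Delta u = \epsilon \int u \cdot \Delta(\chi^{2k})$, bounded by $C_k \epsilon \int \chi^{2k-2} u$ since $|\Delta(\chi^{2k})| \leq C_k \chi^{2k-2}$. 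Using $A \geq \eta$ and $|Q| \leq \sum a_j u^{\gamma_j}$ then gives
\[
\eta \int \chi^{2k} u^\gamma \leq C_k \epsilon \int \chi^{2k-2} u + \sum_j \int a_j u^{\gamma_j} \chi^{2k}.
\]
Young's inequality---applied via $u \chi^{2k-2} \leq \delta u^\gamma \chi^{2k} + C_\delta$ and $a_j u^{\gamma_j} \chi^{2k} \leq \delta u^\gamma \chi^{2k} + C_\delta a_j^{\gamma/(\gamma-\gamma_j)}$---absorbs the lower-order terms into the LHS for $\delta$ small, producing the base bound $\|u\|_{L^\gamma(\Omega)} \leq M_\gamma$ depending on $\|a_j\|_{L^{\gamma/(\gamma-\gamma_j)}(X)}$.

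For $p > \gamma$ the plan is to iterate by testing against $\chi^{2k} u^q$ with $q > 0$. Integration by parts now produces an additional gradient contribution $\epsilon q \int \chi^{2k} u^{q-1} |\nabla u|^2 = \tfrac{4\epsilon q}{(q+1)^2}\int \chi^{2k} |\nabla v|^2$ with $v = u^{(q+1)/2}$. Cauchy--Schwarz bounds the resulting cross term and Young's inequality absorbs the $|Q| u^q$ contributions at the cost of introducing $\|a_j\|_{L^{(\gamma+q)/(\gamma-\gamma_j)}(X)}$; this gives an estimate of the schematic form $\int \chi^{2k} u^{\gamma+q} + c\,\epsilon \int \chi^{2k}|\nabla v|^2 \leq C\,\epsilon \int \chi^{2k-2} v^2 + \text{data}$. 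Combining with the Sobolev embedding $\|v\chi^k\|_{L^{2^*}}^2 \leq C(\|\nabla(v\chi^k)\|_{L^2}^2 + \|v\chi^k\|_{L^2}^2)$ (any finite exponent suffices when $\dim X = 2$, and $2^* = 2n/(n-2)$ for $n \geq 3$) converts an $L^{q+1}$ bound on $u$ into an $L^{(q+1)\cdot 2^*/2}$ bound on $u$ over a slightly smaller cut-off region. Iterating this Moser step finitely many times reaches any prescribed $L^p(\Omega)$ bound, with the final constant depending on $\|a_j\|_{L^{q(p)}(X)}$ for a $q(p) < \infty$ determined by the number of iterations.

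The main obstacle is the handling of the gradient term $\epsilon q \int \chi^{2k} u^{q-1} |\nabla u|^2$: it carries the potentially small factor $\epsilon$, yet it is essential for the Sobolev gain that drives the iteration. The delicate point is ensuring the iteration constants remain uniform in $\epsilon \in [0,\epsilon_0]$. This uniformity should hold because the coercive $L^{\gamma+q}$ control arises from absorbing $A u^\gamma$ on the LHS rather than from elliptic regularity of $\epsilon \Delta$, so it does not degenerate as $\epsilon \to 0$; in the limit $\epsilon = 0$ the inequality reduces to the pointwise algebraic bound $\eta u^\gamma \leq |Q|$, which directly gives the $L^p$ estimate via Young's inequality.
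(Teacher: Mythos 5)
Your base case is sound and is a legitimate variant of the paper's argument: testing \eqref{eqn:c0} against $\chi^{2k}$ with $k\geq\gamma/(\gamma-1)$, moving the Laplacian entirely onto the cut-off, and absorbing the error term and the $Q$-terms by Young's inequality does give a uniform $L^{\gamma}(\Omega)$ bound. (The paper instead tests against $u\phi^2$ with a cut-off constructed so that $|\nabla\phi|^2/\phi^{\alpha}$ is bounded for every $\alpha<2$, and reduces general $p$ to $p=1$ by the substitution $u\mapsto u^p$.)

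The gap is in the iteration step, and it is precisely the point you flag but do not resolve. The Sobolev inequality $\|v\chi^k\|_{L^{2^*}}\lesssim\|\nabla(v\chi^k)\|_{L^2}+\|v\chi^k\|_{L^2}$ requires the full Dirichlet energy of $v=u^{(q+1)/2}$, whereas your energy inequality only controls $\epsilon\int\chi^{2k}|\nabla v|^2$. Dividing by $\epsilon$ makes each Moser step cost a factor $\epsilon^{-1}$, so the resulting constant is not uniform on $[0,\epsilon_0]$, and at $\epsilon=0$ the mechanism yields nothing at all, since there is no gradient term to exploit. Your closing remark that uniformity ``should hold because the coercive control arises from absorbing $Au^{\gamma}$'' is the right instinct but contradicts the route you actually take: if the gain comes from the zeroth-order term, the Sobolev step is superfluous; if it comes from Sobolev, it degenerates. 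The repair is to discard the Sobolev embedding and read the improvement off the first term of your own schematic inequality. Testing against $\chi^{2k}u^{q}$ and throwing away the nonnegative term $\epsilon q\int\chi^{2k}u^{q-1}|\nabla u|^2$ gives, after absorbing $Q$,
\[
\eta\int\chi^{2k}u^{\gamma+q}\;\leq\; C\,\epsilon\int\chi^{2k-2}u^{q+1}|\nabla\chi|^2+\mathrm{data},
\]
and since $\gamma+q>q+1$ the right-hand term is absorbed into the left by Young's inequality once $k\geq(\gamma+q)/(\gamma-1)$. This produces $\|u\|_{L^{\gamma+q}(\Omega)}\leq M$ in a single step for arbitrary $q$, with constants manifestly uniform in $\epsilon\in[0,\epsilon_0]$, so no iteration is needed. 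This is in essence the paper's proof: the substitution $u\mapsto u^p$ followed by testing against $u\phi^2$ amounts to the same choice of test function, with the cut-off error terms absorbed into $\int Au^{1+\gamma}\phi^2$ by shrinking the collar $V\setminus\Omega$ and $\epsilon_0$.
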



\begin{proof}
We adopt here the convention that $C$ always denotes a constant depending only on the fixed data and not on $u$ or $\epsilon$. Its value might change from line to line. 

It is enough to prove the statement for \emph{some} $\epsilon_0 > 0$, which we will later assume to be sufficiently small. Indeed, the corresponding statement for any other $\epsilon_0' > \epsilon_0$ can be reduced to the one for $\epsilon_0$ by multiplying both sides of \eqref{eqn:c0} by $\epsilon_0 / \epsilon_0'$ at the cost of appropriately scaling $A$ and $Q$.  
Furthermore, it suffices to prove the statement for $p=1$ since for $p >1$ we have
\[ \begin{split} \Delta( u^p ) &= - p(p-1) u^{p-2} | \nabla u |^2 + p u^{p-1} \Delta u \\
& \leq -pA u^{p - 1 + \gamma} - pu^{p-1} Q(x,u) \\
& \leq - A' (u^p)^{\gamma'} - Q'(x,u^p), \end{split} \]
where 
\[ A' = pA, \qquad \gamma' = 1 + \frac{\gamma-1}{p}, \qquad Q'(x,y) = py^{ \frac{p-1}{p} }Q(x, y^{\frac{1}{p}}). \]
We easily check that the new data $(A', \gamma', Q')$ satisfies the hypotheses of the lemma. The only non-trivial condition is the estimate for $Q'$ which follows from
\[ | Q'(x,y) | \leq  p \sum_{i}^k a_j(x) y^{1 + \frac{\gamma_j - 1}{p}} = p \sum_{i}^k a_j(x) y^{\gamma_j'}. \]
Note that $\gamma_j' < \gamma'$. Therefore, the statement for $p > 1$ reduces to that for $p=1$ after replacing $(u, A, \gamma, Q)$ by $(u^p, A', \gamma', Q')$. 

In fact, we will bound $\| u \|_{L^{1+\gamma}(\Omega)}$ . Let $V_0 \subset X$ be an open subset containing $\overline{\Omega}$ such that the volume of $V_0 \setminus \Omega$ is sufficiently small. We will specify later what we mean by that, but for the moment let us stress that the choice of  $V_0$ will depend only on the fixed data and not the function $u$. Once $V_0$ is fixed, choose a subset $V \subset V_0$ and a bump function $\phi \in C^{\infty}(X)$ as in Lemma \ref{lem:cutoff}.
Note that $\mathrm{vol}(V \setminus \Omega) \leq \mathrm{vol}(V_0 \setminus \Omega)$.

Multiply inequality \eqref{eqn:c0} by $u \phi^2$ and integrate it over $X$: 
\begin{equation} \label{eqn:c0_1}
\int_X   \epsilon (\Delta u) u \phi^2 + A u^{1+\gamma} \phi^2 + Q u \phi^2 \leq 0 .
 \end{equation}
Since $\phi$ has compact support, integration by parts yields
\[ \begin{split} \int_X ( \Delta u) u \phi^2 &=   \int_X \langle \nabla u, \nabla (u \phi^2) \rangle \\
& =  \int_X \langle \nabla u , 2u\phi \nabla \phi + \phi^2 \nabla u \rangle \\
& = \int_X 2 u \phi \langle \nabla u, \nabla \phi \rangle + \phi^2 | \nabla u |^2 \\
&=  \int_X | u \nabla \phi + \phi \nabla u |^2 - u^2 | \nabla \phi |^2 \\
& \geq - \int_X u^2 | \nabla \phi |^2.
\end{split}
\]
Together with \eqref{eqn:c0_1}, this implies the inequality
\[
\int_X - \epsilon u^2 | \nabla \phi |^2 + A u^{1+\gamma} \phi^2  + Q u \phi^2 \leq  0.
\]
Recall that $\phi$ is supported in $V$ and $\phi = 1$ on $\Omega$. Let $P = V \setminus \Omega$ so that $V = \Omega \cup P$. Splitting the integral on the left-hand side according to this decomposition and rearranging the inequality, we obtain
\begin{equation} \label{eqn:c0_2} \int_{\Omega} A u^{1+\gamma} + I \leq \int_{\Omega} | Q | u,
\end{equation}
where we have collected all integrals over $P$ into a single term,
\[ I = I_0 + I_1 + I_2, \]
\[ I_0 = \int_P A u^{1 + \gamma} \phi^2, \quad I_1 = \int_P Q u \phi^2, \quad I_2 = -  \epsilon \int_P u^2 | \nabla \phi |^2. \]
The next goal is to estimate $I$. We will show that for a suitable choice of $V$ and $\epsilon_0$, depending only on the initial data and not on $u$, we may assume that $I \geq 0$, provided that $\epsilon \leq \epsilon_0$. Strictly speaking, this will not always be true, but in the case when our estimate fails, we will obtain an upper bound for $| I |$ so that we can move it to the right-hand side of \eqref{eqn:c0_2}.

Before proving this, let us show that the inequality $I \geq 0$ gives us a bound for $\| u \|_{L^{1+\gamma}(\Omega)}$. If $I \geq 0$, then by \eqref{eqn:c0_2} and H\"older's inequality,
\begin{equation} \label{eqn:c0_3} \begin{split} \eta \int_{\Omega} u^{1+\gamma} & \leq \int_{\Omega} A u^{1+\gamma} \leq \int_{\Omega} | Q | u  \leq \int_{\Omega} \sum_{j=1}^k a_j u^{1+\gamma_j}  \\
&\leq  \sum_{j=1}^k \| a_j \|_{L^{p_j}(X)} \left( \int_{\Omega} u^{1+\gamma} \right)^{1/q_j},
\end{split} \end{equation}
where the H\"older exponents $p_j$ and $q_j$ are given by
\[ q_j = \frac{1+\gamma}{1+\gamma_j}, \qquad \frac{1}{p_j} + \frac{1}{q_j} = 1. \]
Note that $q_j > 1$ for each $i$, because $\gamma_j < \gamma$.  An equivalent way of writing \eqref{eqn:c0_3} is
\[ \| u \|_{L^{1+\gamma}(\Omega)}^{1+\gamma} \leq \eta^{-1} \sum_{j=1}^k \| a_j \|_{L^{p_j}(X)} \| u \|_{L^{1+\gamma}(\Omega)}^{\frac{1+\gamma}{q_j}}, \]
which, in view of $q_j > 1$, results in an upper bound for $\| u \|_{L^{1+\gamma}(\Omega)}$. 
The dependance of the bound on the initial data is clear. 

In order to finish the proof, it remains to estimate the integral $I = I_0 + I_1 + I_2$.  We will deal separately with each of the three terms. The first one contributes positively to $I$ and is bounded below by
\begin{equation} \label{eqn:c0_4} I_0 \geq \eta \int_P u^{1 + \gamma} \phi^2. 
\end{equation}
The terms that can contribute negatively are $I_1$ and $I_2$. We estimate the former using our assumption on $Q$ and H\"older's inequality:
\[ \begin{split} | I_1| &\leq \sum_{j=1}^k \int_P | a_j | u^{1+\gamma_j} \phi^2 \\
& \leq \sum_{j=1}^k \left( \int_P |a_j|^{p_j} \right)^{1/{p_j}} \left( \int_P u^{1+\gamma} \phi^{2q_j} \right)^{1/q_j} \\
& \leq  \sum_{j=1}^k \mathrm{vol}(P)^{1/2p_j}  \| a_j \|_{L^{2p_j}(X)}\left( \int_P u^{1+\gamma} \phi^{2q_j} \right)^{1/q_j},
\end{split} \]
where the H\"older exponents $p_j$ and $q_j$ are as before. Let $S = \int_P u^{1+\gamma} \phi^{2q_j}$. If $S \leq 1$, then $|I_1|$ is bounded by a constant independent of $u$, say $C$, and we can move it on the right-hand side of \eqref{eqn:c0_2}. Next we replace $I$ by the sum of the remaining two terms $I' = I_0 + I_2$ and if we can show that $I' \geq 0$, then repeating the previous discussion we arrive at a bound for $\| u \|_{L^{1+\gamma}(\Omega)}$ with an extra term involving $C$. Thus, let us assume that $S \geq 1$. In this case, we have $S^{1/q_j} \leq S$ and
 \[ \begin{split} | I_1| &\leq \sum_{j=1}^k \mathrm{vol}(P)^{1/2p_j}   \| a_j \|_{L^{2p_j}(X)} \int_P u^{1+\gamma} \phi^{2q_j} \\
 & \leq \sum_{j=1}^k \mathrm{vol}(P)^{1/2p_j}  \| a_j \|_{L^{2p_j}(X)} \int_P u^{1+\gamma} \phi^2,
 \end{split} \]
 where we have also used that $\phi \leq 1$ and so $\phi^{2q_j} \leq \phi^2$. Comparing the right-hand side of the inequality with the previously obtained upper bound \eqref{eqn:c0_4} for $I_0$ we see that if $P$ has sufficiently small volume (which can be guaranteed by the choice of the initial open set $V_0$), then
  \[ |I_1| \leq \frac{I_0}{2}. \]
Furthermore, how small $P$ has to be depends only on $\eta$ and $\| a_j \|_{L^{p_{\max}}(X)}$, where $p_{\max} = \max\{ p_1, \ldots, p_k \}$. 
Note that at this point the sets $V$ and $P$ are chosen and will not be changed.

The second potentially negative term $I_2$ is dealt with in a similar manner. For every $\alpha \in \R$, H\"older's inequality implies that
\begin{equation} \label{eqn:c0_5} \begin{split} |I_2| &= \epsilon \int_P u^2 | \nabla \phi|^2 = \epsilon \int_P  \frac{ | \nabla \phi |^2}{\phi^{\alpha}}  u^2 \phi^{\alpha}\\
& \leq \epsilon  \left\| \frac{ | \nabla \phi |^2 }{\phi^{\alpha}} \right\|_{L^q(P)} \left( \int_P u^{1+\gamma} \phi^{ \frac{\alpha (1+\gamma)}{2}} \right)^{\frac{2}{1+\gamma}},
\end{split}
\end{equation}
where $q$ is given by $1/q + 2/(1+\gamma) = 1$.  Observe that $1+ \gamma > 2$, so we can choose $\alpha$ so that
\[ \frac{4}{1+\gamma} < \alpha < 2. \]
Then, $| \nabla \phi |^2 / \phi^{\alpha}$ is bounded on $P$, and the first factor on the right-hand side of \eqref{eqn:c0_5} is finite. As regards the integral in the second factor, assuming as before that it is greater than or equal to one (as otherwise we can rearrange and get a bounded factor on the right-hand side of \eqref{eqn:c0_2}), we arrive at
\[ | I_2| \leq C \epsilon \int_P u^{1+\gamma} \phi^{ \frac{\alpha (1+\gamma)}{2}} \leq C \epsilon \int_P u^{1+\gamma} \phi^2, \]
where we have used that  $\phi <1$ and $\alpha(1+\gamma)/2 > 2$. 
Note that here the constant $C$ depends on the choice of $P$ and can be potentially large.
However, we still have the freedom to choose $\epsilon$ small enough---as remarked at the beginning of the proof, it suffices to establish an estimate for $\epsilon$ sufficiently small.
Thus, comparing the right-hand side of the above inequality with the lower bound \eqref{eqn:c0_4} for $I_0$, we conclude that if $\epsilon_0$ is sufficiently small, then for all $\epsilon \leq \epsilon_0$,
\[ | I_2| \leq \frac{I_0}{2}. \]
Together with the estimate for $|I_1|$, this implies that $I = I_0 + I_1 + I_2$ is non-negative (or else we can rearrange \eqref{eqn:c0_2}), which finishes the proof of the lemma. 
\end{proof}

Our proof does not work in the case $\gamma = 1$. What fails  is the last estimate for $I_2$, because we cannot set $\alpha = 2$. Indeed, there is no cut-off function $\phi$ such that  $| \nabla \phi | / \phi$ is bounded. However, we can still prove a slightly weaker statement.

\begin{lem}
  \label{lem:c0weaker}
  If $\gamma = 1$, then the statement of Lemma \ref{lem:c0} still holds provided that $u$ satisfies an estimate $\| u \|_{L^{2p}(X)} \leq K \epsilon^{-1}$ for some constant $K$. Apart from the rest of the data, the final bound for $\| u \|_{L^p(\Omega)}$ depends also on $K$. 
\end{lem}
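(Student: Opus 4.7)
The plan is a proof by contradiction combined with a rescaling argument, exploiting the strict subcriticality $\gamma_j < 1 = \gamma$. If the lemma failed, there would exist sequences $\epsilon_i \in (0, \epsilon_0]$ and $u_i \geq 0$ satisfying \eqref{eqn:c0} with $\gamma = 1$ and the hypothesis $\|u_i\|_{L^{2p}(X)} \leq K \epsilon_i^{-1}$, but with $\lambda_i := \|u_i\|_{L^p(\Omega)} \to \infty$. H\"older's inequality and the hypothesis together force $\lambda_i \leq C\,\|u_i\|_{L^{2p}(X)} \leq CK/\epsilon_i$, so $\epsilon_i \to 0$ and $\epsilon_i \lambda_i \leq CK$.

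I would then rescale by $w_i := u_i / \lambda_i$. By construction $\|w_i\|_{L^p(\Omega)} = 1$, while $\|w_i\|_{L^{2p}(X)} \leq K/(\epsilon_i \lambda_i) \leq CK$ stays uniformly bounded. Because $\gamma = 1$, the rescaled inequality reads
\[
  \epsilon_i \Delta w_i + A\, w_i + \tilde Q_i \;\leq\; 0,
  \qquad
  |\tilde Q_i(x)| \;\leq\; \sum_{j=1}^k a_j(x)\, w_i(x)^{\gamma_j}\, \lambda_i^{\gamma_j-1},
\]
and the decisive feature is that the coefficients $\lambda_i^{\gamma_j - 1} \to 0$ since each $\gamma_j < 1$. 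In other words, because $\gamma = 1$ is critical, the rescaling leaves the principal part $Aw_i$ intact while driving the subcritical perturbation to zero.

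Next I would run the Caccioppoli estimate familiar from the proof of Lemma~\ref{lem:c0}: fix a smooth cutoff $\phi$ equal to $1$ on $\Omega$ and compactly supported in $X \setminus \partial X$, multiply the inequality above by $w_i \phi^2$, and integrate by parts. This yields
\[
  \eta \int w_i^2 \phi^2
  \;\leq\; \epsilon_i \!\int w_i^2\, |\nabla \phi|^2
      \;+\; \sum_{j=1}^k \lambda_i^{\gamma_j - 1} \!\int a_j\, w_i^{1+\gamma_j}\, \phi^2.
\]
The first term on the right tends to zero because $\epsilon_i \to 0$ while $\int w_i^2 |\nabla\phi|^2 \leq \||\nabla\phi|^2\|_{L^\infty}\|w_i\|_{L^2(X)}^2$ is uniformly bounded from the $L^{2p}(X)$ control. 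Each summand in the sum tends to zero because H\"older's inequality with the conjugate pair $(r_j, 2p/(1+\gamma_j))$ bounds $\int a_j w_i^{1+\gamma_j}\phi^2 \leq \|a_j\|_{L^{r_j}(X)}\,\|w_i\|_{L^{2p}(X)}^{1+\gamma_j}$ uniformly in $i$ while $\lambda_i^{\gamma_j - 1} \to 0$. Consequently $\|w_i\|_{L^2(\Omega)} \to 0$.

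Finally, I would interpolate between this and the uniform bound $\|w_i\|_{L^{2p}(\Omega)} \leq \|w_i\|_{L^{2p}(X)} \leq CK$ to conclude $\|w_i\|_{L^p(\Omega)} \to 0$: for $p \leq 2$ directly from the embedding $L^2(\Omega) \hookrightarrow L^p(\Omega)$, and for $p > 2$ from $\|w\|_{L^p(\Omega)} \leq \|w\|_{L^2(\Omega)}^{\theta} \|w\|_{L^{2p}(\Omega)}^{1-\theta}$ with $\theta = 1/(p-1)$. This contradicts the normalisation $\|w_i\|_{L^p(\Omega)} = 1$ and proves the lemma. The main obstacle is the admissibility of the H\"older exponents in the Caccioppoli step, which is automatic once $1+\gamma_j \leq 2p$; the constraint is harmless for $p \geq 1$, and it is precisely the need to bound $\int a_j w_i^{1+\gamma_j}\phi^2$ using the $L^{2p}(X)$ control on $w_i$ that pins down the form of the hypothesis in the lemma.
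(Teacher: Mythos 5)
There is a genuine gap at the first step of your rescaling, and it sits exactly where the whole difficulty of the lemma lives. From $\lambda_i \leq C\|u_i\|_{L^{2p}(X)} \leq CK\epsilon_i^{-1}$ you correctly get the \emph{upper} bound $\epsilon_i\lambda_i \leq CK$, but the next assertion, $\|w_i\|_{L^{2p}(X)} \leq K/(\epsilon_i\lambda_i) \leq CK$, requires a \emph{lower} bound $\epsilon_i\lambda_i \geq c>0$, which you never establish and which need not hold: the hypothesis only caps $\|u_i\|_{L^{2p}(X)}$ from above by $K\epsilon_i^{-1}$, and nothing prevents $\|u_i\|_{L^{2p}(X)}$ from being far larger than $\lambda_i=\|u_i\|_{L^p(\Omega)}$ (mass concentrated outside $\Omega$, with $\epsilon_i\to 0$ very fast, so that $\epsilon_i\lambda_i\to 0$). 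Without uniform control of $\|w_i\|_{L^{2p}(X)}$ every later step collapses: the cutoff error is only bounded by $\epsilon_i\int w_i^2|\nabla\phi|^2 \leq CK^2/(\epsilon_i\lambda_i^2)$, which need not tend to zero; the H\"older bounds on the subcritical terms are not uniform in $i$; and the final interpolation has no second endpoint. Note moreover that the term you must kill is supported where $\nabla\phi\neq 0$, i.e.\ outside $\Omega$, precisely where the normalisation $\|w_i\|_{L^p(\Omega)}=1$ carries no information. (The H\"older-exponent admissibility you flag at the end is not the real obstacle.)

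For comparison, the paper does not rescale: it reruns the proof of Lemma \ref{lem:c0} verbatim and changes only the estimate of $I_2=-\epsilon\int_P u^2|\nabla\phi|^2$, via the three-factor H\"older splitting $u^2=u^{1/2}\cdot\bigl(u^{3/2}\phi^{\alpha}\bigr)\cdot\phi^{-\alpha}$ with exponents $(8,4/3,8)$. The factor $u^{1/2}$ is measured in $L^8(X)$ and costs $\|u\|_{L^4(X)}^{1/2}\leq K^{1/2}\epsilon^{-1/2}$ by the hypothesis, while the factor $\bigl(\int_P u^2\phi^{4\alpha/3}\bigr)^{3/4}$ is absorbed into the coercive term $I_0\geq\eta\int_P u^2\phi^2$; the net prefactor $\epsilon\cdot\epsilon^{-1/2}=\epsilon^{1/2}$ is then small. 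In other words, the hypothesis is spent only to the power $1/2$, with the remaining half of $u^2$ fed back into the good term. This interpolation is the idea your rescaling scheme is missing; if you wish to keep a contradiction framework you would still need to reproduce it to control the cutoff term, so as written the argument does not close.
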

\begin{proof}
Suppose for simplicity that $p=1$, so that 
\[  \| u \|_{L^4(X)} \leq K\epsilon^{-1}. \]
Following the proof of Lemma \ref{lem:c0}, we can obtain a bound for $\| u \|_{L^2(\Omega)}$  The only modification that we have to make is the estimate \eqref{eqn:c0_5} which now should be
\[ \begin{split} 
| I_2 | &= \epsilon \int_P u^2 | \nabla \phi |^2 = \epsilon \int_P \left(u^{1/2} \right) \left( u^{3/2} \phi^{\alpha} \right) \frac{ | \nabla \phi |^2}{\phi^{\alpha}} \\
&\leq \epsilon  \|  u^{1/2} \|_{L^8(X)} \left\| \frac{ | \nabla \phi |^2}{\phi^{\alpha}} \right\|_{L^8(P)} \left( \int_p u^2 \phi^{\frac{4\alpha}{3}} \right)^{3/4} \\
&\leq \epsilon^{1/2} K^{1/2} \left\| \frac{ | \nabla \phi |^2}{\phi^{\alpha}} \right\|_{L^8(P)} \left( \int_P u^2 \phi^{\frac{4\alpha}{3}} \right)^{3/4},
\end{split} \]
where we have used H\"older's inequality with weights $(8,8,4/3)$. Now for $\alpha$ satisfying $2 > \alpha > 3/2$, the function $| \nabla \phi |^2 / \phi^{\alpha}$ is bounded $4\alpha / 3 > 2$, so that (again, assuming that the integral on the right-hand side is greater than one) we obtain
\[ | I_2 | \leq C \epsilon^{1/2} \int_P u^2 \phi^2. \]
Recall that in the case $\gamma = 1$, the positive integral $I_0$ is bounded below by
\[  I_0 \geq \eta \int_P u^2 \phi^2, \]
so that for $\epsilon$ small enough we have $ I_2 \leq I_0 / 2$. This leads to a bound for $\| u \|_{L^2(\Omega)}$ as before. In the same way we obtain a bound for $\| u \|_{L^p (\Omega)}$ 
\[ 
  \| u \|_{L^{2p}(\Omega)} \leq K\epsilon^{-1}. 
\]
\end{proof}

\begin{proof}[Proof of Proposition \ref{prop:bounds}]
We will establish bounds of the form $\| f \|_{W^{k,p}(\Omega)} \leq M_{k,p}$  for all $k$ and $p$ by induction over $k$. Let us start with $k=0$. Let $\eta > 0$ be such that 
\[ A_1 + \cdots + A_n \geq n \eta, \qquad \textnormal{and} \qquad B_1 + \cdots + B_m \geq m \eta, \]
and set $\Omega_j = \Omega \cap \{ A_j \geq \eta \}$. The subsets $\{ A_j \geq \eta \}_{j=1,\ldots, n}$ cover $X$ and therefore $\Omega_1, \ldots, \Omega_n$ cover $\Omega$. Let $u = e^f$. For any given $j$ we have
\[ \begin{split} \epsilon \Delta u &= - \epsilon e^f | \nabla f|^2 + \epsilon e^f \Delta f  \\
&\leq u \left( - \sum_{j=1}^n A_j u^{\alpha_j} + \sum_{j=1}^n B_j u^{- \beta_j} - w \right) \\
&\leq -A_j u^{1 + \alpha_j} + \sum_{j=1}^n B_j u^{1-\beta_j} - w u, 
\end{split} \]
or equivalently,
\[ \epsilon \Delta u + A_j u^{1+ \alpha_j} + Q(x,u) \leq 0, \]
where $Q(x,u) = - \sum_j B_j u^{1-\beta_j} + wu$. It follows from Lemma \ref{lem:c0} that $\| u \|_{L^p(\Omega_j)}$ is bounded by a constant depending only on the fixed data. Since $\Omega_1, \ldots, \Omega_n$ cover $\Omega$, we obtain a bound for $\| u \|_{L^p(\Omega)}$. Similarly, considering the subsets $\Omega \cap \{ B_j \geq \eta \}$ and the function $e^{-f}$ we find bounds for $\| e^{-f} \|_{L^p(\Omega)}$. Combining them with the inequality
\[ | f| \leq e^f + e^{-f}, \]
we obtain bounds for $\| f \|_{L^p(\Omega)}$.

Suppose that $W^{k-1,p}$ bounds have been established for some $k \geq 1$ and all $p$. We may assume that they hold on a slightly larger domain containing $\overline{\Omega}$, which we assume to be all of $X$ to keep the notation simple. First consider the case when $k = 2l$ is even. Consider the function 
\[ v = \Delta^l f =  \underbrace{\Delta \cdots \Delta}_{l \textnormal{ times}} f. \]
Applying $\Delta^l$ to both sides of \eqref{eqn:kw2} and using the formula
\[ \Delta( gh ) = g \Delta h - 2 \langle \nabla g, \nabla h \rangle + h\Delta g, \]
we inductively show that $v$ satisfies a differential equation of the form
\begin{equation} \label{eqn:bounds}
 \epsilon \Delta v + A v + P(e^{\alpha_j f}, e^{- \beta_j f}, \nabla f, \ldots , \nabla^{2l-1} f ) = 0, 
 \end{equation}
where
\[ A = \sum_{j=1}^n \alpha_j A_j e^{\alpha_j f} + \sum_{i=1}^m \beta_j B_j e^{- \beta_j f}, \]
and $P$ is a polynomial function of the functions $e^{\alpha_j f}$, $e^{-\beta_j f}$, and the first $2l-1$ covariant derivatives of $f$. Its coefficients depend only on $A_j$, $B_j$, $w$, and their derivatives. In particular, $P$ is a finite sum $P = \sum_{\gamma} P_{\gamma}$ say, where each term $P_{\gamma}$ satisfies an inequality of the form
\[ | P_{\gamma} | \leq C e^{a f} e^{- b f} | \nabla f |^{c_1} \ldots | \nabla^{2l-1} f |^{c_{2l-1}} \]
with some exponents $a,b,  c_1, \ldots, c_{2l-1}$ and a coefficient $C$ depending only on $A_j$, $B_j$, $w$, and their derivatives. Therefore, by the induction hypothesis and H\"older's inequality, we can bound the $L^p$ norm of $P$ for any $p$, by a constant depending only on the fixed data and not on $f$. 

At every point $A$ is bounded below by either $\sum_j \alpha_j A_j$ or $\sum_j \beta_j B_j$,  depending on the sign of $f$. In any case, there is a positive  constant $\tilde{\eta}$, depending only on the fixed data, such that $A \geq \tilde{\eta}$. Note that $v$ is raised to the first power in \eqref{eqn:bounds}. Thus, we are in place to apply Lemma \ref{lem:c0weaker} to obtain a bound for $\| v \|_{L^p(\Omega)}$. In order to do so, we need to make sure that $v$ obeys an estimate of the form 
\[ 
  \| v \|_{L^{2p}(\Omega)} \leq K_p \epsilon^{-1}
\] 
for some constant $K_p$. Such an estimate follows from the induction hypothesis and the fact that $v' =  \Delta^{l-1} f$ satisfies an equation analogous to \eqref{eqn:bounds}:
\[ \epsilon v = \epsilon \Delta v' = - Av' + P' (e^{\alpha_j f}, e^{-\beta_j f}, \nabla f, \ldots, \nabla^{2l-3} f ), \]
where $P'$ is a polynomial function as before. Since the right-hand side depends only on the derivatives of $f$ up to the order $2l-2$, we obtain an estimate for $\epsilon v$ as required. Thus, Lemma \ref{lem:c0weaker} yields a bound for $\| v \|_{L^p(\Omega)} $. Of course, we can as well assume that it holds on a slightly larger domain containing $\overline{\Omega}$. Then, in view of $v = \Delta^l f$, the elliptic estimate for the Laplacian implies a bound for $\| f \|_{W^{2l, p}(\Omega)}$. This finishes the proof of the induction step in the case $k = 2l$.

The odd case $k = 2l+1$ is similar. Assume that the assertion is true for $k-1 = 2l$. Let $v = \Delta^l f$ as before and $\psi = | v |^2$. By the Bochner formula,
\[ \begin{split} \frac{1}{2} \Delta \psi &= - | \nabla^2 \psi |^2 - \mathrm{Ric}( \nabla \psi, \nabla \psi ) + \langle \nabla \psi, \nabla ( \Delta \psi ) \rangle \\
& \leq  \| \mathrm{Ric} \|_{C^0(X)} | \nabla \psi |^2 +  \langle \nabla \psi, \nabla ( \Delta \psi ) \rangle, \end{split} \]
where $\mathrm{Ric}$ is the Ricci curvature of $X$. After taking the gradient of \eqref{eqn:bounds} and plugging it to the inequality above, we arrive at
\[ \frac{\epsilon}{2} \Delta \psi + \left(A - \epsilon \| \mathrm{Ric} \|_{C^0(X)} \right) \psi + Q(e^{\alpha_j f}, e^{-\beta_j f}, \nabla f, \ldots, \nabla^{2l} f ) \psi^{1/2} \leq 0 , \]
where $Q$ is a polynomial function of $e^{\alpha_j}$, $e^{- \beta_j f}$, and the first $2l$ derivatives of $f$. Provided that $\epsilon$ is sufficiently small, the function $A - \epsilon \| \mathrm{Ric} \|$ is bounded below by a positive constant and we can apply Lemma \ref{lem:c0weaker} as before to obtain a bound for $\| \psi \|_{L^p(\Omega)}$. Again, by the elliptic estimate for the Laplacian, this yields  $W^{k,p}$ bounds for $f$. The statement for general $\epsilon \leq \epsilon_0$ follows from a scaling argument as described in the proof of Lemma \ref{lem:c0}. 
\end{proof}

\section{Proofs of the theorems}\label{sec:proofs}
We prove the theorems in the order of increasing generality.

\begin{proof}[Proof of Theorem \ref{thm:vortex}]
Since $\epsilon_i \to 0$, we may assume that none of the sections $\varphi_i$ is identically zero. Let $\mathcal{A}$ be the space of unitary connections on $L$ and  $\mathcal{G}^c$ be the complex gauge group of $L$, that is the space of smooth maps from $\Sigma$ to $\C^{\times}$. 

\begin{step}[Convergence modulo $\mathcal{G}^c$] We claim that there are sequences of complex gauge transformations $g_i \in \mathcal{G}^c$  such that, after passing to a subsequence, $g_i(A_i, \varphi_i)$ converges in $C^{\infty}(\Sigma)$ to a pair $(A', \varphi')$. The limiting section $\varphi'$ is not identically zero and satisfies $\del_{A'} \varphi' = 0$.  

Let us prove this claim. The quotient $\mathcal{A} / \mathcal{G}^c$  with the $C^{\infty}$ topology is homeomorphic to the Jacobian torus $H^1(\Sigma, \R) / H^1(\Sigma, \Z)$. In particular, it is compact and there is a sequence of $g_i \in \mathcal{G}^c$ such that, after passing to a subsequence, $A_i' = g_i A_i$ converges in $C^{\infty}$ to a connection $A'$, say.  After replacing $g_i$ by $\mu_i g_i$, where  $\mu_i = \| g_i \varphi_i \|_{L^2}^{-1}$ , we may assume that $\| g_i \varphi_i \|_{L^2} = 1$ for all $i$.  Note that the constant gauge transformations $\mu_i$ act trivially on the space of connections so that we still have $A_i' \to A'$. The final remark about our choice of $g_i$ is that we will assume them to be purely "imaginary" gauge transformations. Any complex gauge transformation is of the form $g = u e^f$ for a $\U(1)$ gauge transformation $u$ and real function $f \colon \Sigma \to \R$. By incorporating the $\U(1)$ part into the original sequence $(A_i, \varphi_i, \beta_i)$ we may assume that $g_i = e^{f_i/2}$ for a smooth function $f_i \colon \Sigma \to \R$.  

Set $\varphi_i' = g_i \varphi_i$. The action of $\mathcal{G}^c$ preserves the Cauchy-Riemann equation:
\[ \del_{A_i'} \varphi_i' = 0. \]
As a consequence,
\[ \| \del_{A'} \varphi_i' \|_{L^2} = \| (\del_{A'} - \del_{A_i'}) \varphi_i' \|_{L^2} \leq \| A' - A_i' \|_{L^{\infty}} \| \varphi_i' \|_{L^2}  \]
Since $A_i \to A'$, the sequence of norms $\| \del_{A'} \varphi_i' \|_{L^2}$ is bounded by a number independent of $i$. From the elliptic estimate for $\del_{A'}$ we conclude that the sequence $\varphi_i'$ is bounded in $W^{1,2}$. Bootstrapping gives us $C^k$ for any $k$ and we can choose a subsequence (denoted for simplicity by the same symbols) that converge in $C^{\infty}$ to a section $\varphi'$, say, satisfying
\[ \del_{A'} \varphi' = 0 \qquad \textnormal{and} \qquad \| \varphi' \|_{L^2} = 1, \]
which finishes the proof of the claim.
\end{step}

\begin{step}[$C^0$ estimates]
Let $D$ be the set of zeroes of $\varphi'$. Counted with multiplicities, there are exactly $d = \deg(L)$ of them. The next step is to show that the sequence $f_i$ is uniformly bounded on compact subsets of $\Sigma \setminus D$. First, we compute
\[ \begin{split} 
\epsilon_i^2 \left( 2 i \Lambda F_{A_i'} \right) &= \epsilon_i^2 \left( 2i \Lambda F_{A_i} + \Delta f_i \right) \\
&= 1 - | \varphi_i |^2 + \epsilon_i^2 \Delta f_i \\
&= 1 - e^{-f_i} | \varphi_i' |^2 + \epsilon_i^2 \Delta f_i,
\end{split} \]
so after rearranging, we obtain the following partial differential equation for $f_i$:
\begin{equation} \label{eqn:vortexproof1}
\begin{split}
\epsilon_i^2 \Delta f_i &= e^{-f_i} | \varphi_i' |^2 - 1 + \epsilon_i^2 \left( 2 i \Lambda F_{A_i'} \right) \\
&=   q_i e^{-f_i} - w_i,
\end{split} 
\end{equation}
where $q_i = | \varphi_i' |^2$ and $w_i = 1 - \epsilon_i^2 \left( 2 i \Lambda F_{A_i'} \right)$. Now set $u_i = e^{f_i}$. Then
\[ \begin{split}
\epsilon_i^2 \Delta u_i &=  \epsilon_i^2 \left( - e^{f_i} | \nabla f_i |^2 +  e^{f_i} \Delta f_i \right) \\
& \leq q_i - w_i u_i. 
\end{split} \]
Since $w_i \to 1$ uniformly, for $i$ large enough we have $w_i \geq 1/2$. The functions $q_i$ are bounded because $\varphi_i'$ converges. Thus, the maximum principle yields an upper bound for $u_i$, and consequently for $f_i$. On the other hand, we easily compute that 
\[  \epsilon_i^2  \Delta |\varphi_i|^2 + 2 \epsilon_i^2 | \partial_{A_i} \varphi_i |^2 = |\varphi_i|^2 \left( 1 -|\varphi_i|^2 \right), \]
which again by the maximum principle shows that $|\varphi_i|^2 \leq 1$ for all $i$. Since $|\varphi_i|^2  = e^{-f_i} | \varphi_i '|^2$ and $| \varphi_i' |^2$ converges uniformly to $| \varphi' |^2$, it follows that $f_i$ is bounded below uniformly on compact subsets of $\Sigma \setminus D$. 
\end{step}

\begin{step}[Convergence outside $D$] 
Once the $C^0$ estimate is established, it follows from equation \eqref{eqn:vortexproof1},  Proposition \ref{prop:bounds}, and Remark \ref{rem:bounds} that the sequence $f_i$ is bounded uniformly with all derivatives on compact subsets of $\Sigma \setminus D$. Thus, we can choose a subsequence of $f_i$ which converges uniformly with all derivatives on compact subsets of $\Sigma \setminus D$ to a smooth function $f \colon \Sigma \setminus D \to \R$. Let $g = e^{f/2}$ be the corresponding complex gauge transformation. Set $(A, \varphi) = ( g^{-1} A', g^{-1} \varphi')$. The pair is well-defined on $\Sigma \setminus D$ and $(A_i, \varphi_i) \to (A, \varphi)$ in $C^{\infty}_{\loc}(\Sigma \setminus D$. Indeed, we have
\[ \begin{split} 
\varphi_i - \varphi &= g_i^{-1} \varphi_i' - g^{-1} \varphi' \\
&= g_i^{-1} \varphi_i' - g_i^{-1} \varphi' + g_i^{-1} \varphi' - g^{-1} \varphi' \\
&= g_i^{-1} \left( \varphi_i' - \varphi' \right) + \left( g_i^{-1} - g^{-1} \right) \varphi', 
\end{split}, \]
so the convergence of $\varphi_i'$ and $g_i'$ guarantees that for any compact subset $K \subset \Sigma \setminus D$ there are constants $M_{l,K}$ for $l = 0,1, \ldots$ such that
\[ \| \varphi_i - \varphi \|_{C^l(K)} \leq M_{l,K} \left( \| \varphi_i' - \varphi' \|_{C^l(K)} + \| g_i^{-1} - g^{-1} \|_{C^l(K)} \right). \]
As the right-hand side converges to zero, we see that $\varphi_i$ converges to $\varphi$ in $C^l$ for any $l$ on $K$. A similar argument shows the convergence of connections.
\end{step}

\begin{step}[The limiting configuration] 
Passing to the limit in equation \eqref{eqn:vortex1}, we see that $f \colon \Sigma \setminus D \to \R$ is given by
\[  f = \log | \varphi' |^2, \]
which, by $\varphi = e^{-f/2} \varphi'$, is equivalent to $| \varphi | = 1$.  Since we also have $\del_A \varphi = 0$, Lemma \ref{lem:flatness} implies that $\nabla_A \varphi = 0$ and $F_A = 0$ on $\Sigma \setminus D$.
\end{step} 

\begin{step}[Convergence of measures] 
It remains to show that 
\[  \frac{i}{2\pi} \Lambda F_{A_i} \to  \sum_{j = 1}^d \delta (x_j) \]
in the sense of measures, or, equivalently, that for any small disc $B$ around $x_j$, 
\[ \lim_{i \to \infty} \int_{B} \frac{i}{2\pi} F_{A_i} = k, \]
where $k$ is the  multiplicity of the section $\varphi'$ at $x_j$. Choose local coordinates on $B$ together with a unitary trivialisation of $L$. Then $A_i$ is of the form $A_i = d + a_i$ for $a_i \in \Omega^1(B, i\R)$ and the curvature is $F_{A_i} = da_i$. By Stokes' theorem,
\begin{equation} \label{eqn:vortex3} \lim_{i \to \infty} \int_{B} \frac{i}{2\pi} F_{A_i} = \lim_{i \to \infty} \int_{B} \frac{i}{2\pi} da_i = \lim_{i \to \infty} \int_{\partial B} \frac{i}{2\pi} a_i = \int_{\partial B} \frac{i}{2\pi} a, 
\end{equation}
where $a \in \Omega^1(B \setminus \{ x_j \}, i\R)$ is the one-form corresponding to the singular connection $A = d + a$. By Lemma \ref{lem:flatness}, the integral on the right-hand side is the degree of the limiting section $\varphi$ around $x_j$. Since $\varphi'$ and $\varphi$ differ by a non-zero real function on $B \setminus \{ x_j \}$, their degrees around $x_j$ are the same and equal to the multiplicity of $\varphi'$ at $x_j$. 
\end{step}
\end{proof}

We now turn to Theorem \ref{thm:multi}. 
 For simplicity, we write $\del_{A}$ instead of $\del_{BA}$.

\begin{proof}[Proof of Theorem \ref{thm:multi}]
To avoid double upper indices, denote $\alpha_i = \varphi_i^1$ and $\beta_i = \varphi_i^2$. Integrating the last equation of \eqref{eqn:multi}, we obtain
\begin{equation} \label{eqn:compactness0} \| \alpha_i \|_{L^2}^2 - \| \beta_i \|_{L^2}^2 = - 2 \pi \epsilon_i  \deg L, 
\end{equation}
which together with
\[  \| \alpha_i \|_{L^2}^2 + \| \beta_i \|_{L^2}^2 =  \| \varphi_i \|_{L^2}^2 = 1 \]
implies that for sufficiently large $i$ neither $\alpha_i$ nor $\beta_i$ is identically zero. 

\setcounter{step}{0}

\begin{step}[Convergence modulo $\mathcal{G}^c$] We claim that there are sequences $g_i \in \mathcal{G}^c$ and $\lambda_i > 0$ such that after passing to a subsequence $(g_i A_i, \lambda_i g_i \alpha_i, \lambda_i g_i^{-1} \beta_i)$ converges in $C^{\infty}$ on $\Sigma$ to a triple $(A', \alpha', \beta')$. The limiting sections $\alpha'$ and $\beta'$ are not identically zero and satisfy
\[ \del_{A'} \alpha' = 0 \qquad \textnormal{and} \qquad \del_{A'} \beta' = 0. \] 

By the same argument as in the proof of Theorem \ref{thm:vortex}, we can find $g_i \in \mathcal{G}^c$ such that, after passing to a subsequence, $A_i' = g_i A_i$ converges in $C^{\infty}$ to a connection $A'$. We want to rescale the corresponding sequences of sections $g_i \alpha_i$ and $g_i^{-1} \beta_i$ so that they also converge, and that the limiting sections are non-zero. First, by replacing the sequence $g_i$ by $\mu_i g_i$, where $\mu_i$ are constant complex gauge transformations given by
\[ \mu_i = \sqrt{ \frac{ \| g_i^{-1} \beta_i \|_{L^2}   }{ \| g_i \alpha_i \|_{L^2}  } }, \] 
we may assume that $\| g_i \alpha_i \|_{L^2} = \| g_i^{-1} \beta_i \|_{L^2}$. After changing the original sequence by real gauge transformations we can assume that $g_i = e^{f_i/2}$ for smooth functions $f_i \colon \Sigma \to \R$.  

Set  $\lambda_i = \| g_i \alpha_i \|_{L^2}^{-1} $ and consider the rescaled sequences
\[ \alpha_i' = \lambda_i g_i \alpha_i \qquad \textnormal{and} \qquad \beta_i' = \lambda_i g_i^{-1} \beta_i. \]
We have $\| \alpha_i' \|_{L^2} = \| \beta_i' \|_{L^2} = 1$ so as in the proof of Theorem \ref{thm:vortex} we can choose subsequences (denoted for simplicity by the same symbols) that converge in $C^{\infty}$ to sections $\alpha'$ and $\beta'$ say. They are holomorphic with respect to $A'$ and satisfy
\[ \| \alpha' \|_{L^2} = \lim_{i \to \infty} \| \alpha_i' \|_{L^2} = 1, \qquad  \| \beta' \|_{L^2} = \lim_{i \to \infty} \| \beta_i' \|_{L^2} = 1. \]
\end{step}

\begin{step}[An upper bound for $\lambda_i$] Next, we show that the sequence $\lambda_i$ is bounded above. Assume by contradiction that after passing to a subsequence $\lambda_i \to \infty$. Then
\begin{equation}  | \alpha_i | | \beta_i | = \lambda_i^{-2} | \alpha_i' | | \beta_i' | \leq \lambda_i^{-2} \sup_{i} \| \alpha_i ' \|_{L^{\infty}} \| \beta_i' \|_{L^{\infty}} .
\end{equation}
Since $\alpha_i' \to \alpha'$ and $\beta_i' \to \beta'$ uniformly, we conclude that
\begin{equation}\label{eqn:compactness1}
\lim_{i \to \infty} \| | \alpha_i | | \beta_i | \|_{L^{\infty}} = 0. 
\end{equation}

We will argue that this cannot happen. Set $\psi_i = | \beta_i |^2$ and $Q_i = | \beta_i' |^2$. Differentiating  $ \lambda_i^{2} \psi_i = e^{f_i} Q_i $ twice, we arrive at
\[ \lambda_i^2 \nabla \psi_i = e^{f_i} Q_i \nabla f_i + e^{f_i} \nabla Q_i, \]
and
\[
 \begin{split}  \lambda_i^2 \Delta \psi_i &= \Delta( e^{f_i} ) Q_i - \left\langle \nabla ( e^{f_i} ), \nabla Q_i \right\rangle + e^{f_i} \Delta Q_i, \\
&= e^{f_i} Q_i \Delta f_i - e^{f_i} Q_i | \nabla f_i |^2 - e^{f_i}  \left\langle \nabla f_i , \nabla Q_i \right\rangle+ e^{f_i} \Delta Q_i \\
&= e^{f_i} Q_i \Delta f_i - \left\langle \nabla f_i , e^{f_i} Q_i \nabla f_i + e^{f_i} \nabla Q_i \right\rangle + e^{f_i} \Delta Q_i  \\
&= e^{f_i} Q_i \Delta f_i - \left \langle \nabla f_i, \lambda_i^2 \nabla \psi_i \right\rangle + e^{f_i} \Delta Q_i.
\end{split}
\]
Let $x_i \in \Sigma$ be a global maximum of $\psi_i$, so that $\nabla \psi_i(x_i) = 0$ and $\Delta \psi_i (x_i) \geq 0$. The calculation above implies that
\[ Q_i (x_i) \Delta f_i(x_i) \geq - \Delta Q_i(x_i). \]
We would like to conclude that $\Delta f_i$ is bounded below for all sufficiently large $i$. Consider $i$ large enough so that $Q_i$ is sufficiently close to $Q = | \beta' |^2$ in the $C^2$ norm. Since $\beta'$ is holomorphic and not identically zero, it vanishes at finitely many points. Therefore, for any $\delta > 0$ there is a small open neighbourhood, $V_{\delta}$ say, of the zero set of $\beta'$ such that $Q_i(x) \geq \delta$ whenever $x \in \Sigma \setminus V_{\delta}$ and $i$ is large. It follows that if the sequence $x_i$ is isolated from the zero set of $\beta'$, then there exists $\delta > 0  $ such that 
\[ \Delta f_i(x_i) \geq - \frac{1}{\delta} \sup_i \| \Delta Q_i \|_{L^{\infty}}.  \]
On the other hand, assume that after passing to a subsequence, $x_i \to x$, where $\beta'(x) = 0$. If $x$ is a simple root of $\beta'$, then in local holomorphic coordinates centred around $x$ we have $Q(z) = c |z|^2 + O(|z|^3)$ for some $c > 0$ and as a result $\Delta Q (x) < 0$. It follows that $\Delta Q_i (x_i) < 0$ for $i$ sufficiently large and by \eqref{eqn:compactness2}, we must have that $Q_i(x_i) \neq 0$ since
\[ 0 \leq \lambda_i^2 \Delta \psi_i (x_i)  = e^{f_i(x_i)} Q_i(x_i) \Delta f_i(x_i) + e^{f_i(x_i)} \Delta Q_i(x_i). \]
Thus, dividing both sides by $e^{f_i(x_i)} Q_i(x_i)$ results in the inequality $\Delta f_i(x_i) \geq 0$. In the general situation, the section $\beta'$ vanishes to the order $k$, say. In this case, locally $Q^{1/k}(z) = c |z|^2 + O(|z|^3)$ for some $c > 0$. We can therefore, at least locally, replace $\psi_i$ by $\psi_i^{1/k}$ (note that $\psi_i(x_i) \to \infty$, so $\psi_i^{1/k}$ is a smooth function around $x_i$), $f_i$ by $f_i / k$, and $Q_i$ by $Q_i^{1/k}$ and repeat the previous argument to obtain $\Delta f_i(x_i) \geq 0$. 

From the lower bound for $\Delta f_i(x_i)$ we easily arrive at a contradiction. Indeed, since $A_i' = g_i( A_i)$, the third equation of \eqref{eqn:multi} can be written in the form
\[ \epsilon_i^2  \Lambda i F_{A_i'} = \epsilon_i^2 \Lambda i F_{A_i} + \epsilon_i^2 \Delta f_i = - |\alpha_i|^2 + |\beta_i|^2 + \epsilon_i^2 \Delta f_i, \]
or after rearranging,
\begin{equation} 
 \label{eqn:compactness1.2}
 | \beta_i |^2 =  \epsilon_i^2 \Lambda i F_{A_i'} - \epsilon_i^2 \Delta f_i + |\alpha_i |^2. 
\end{equation}
Since $\Delta f_i(x_i)$ is bounded below and $A_i'$ converges in $C^{\infty}$, at $x_i$ we have
\begin{equation}
  \label{eqn:compactness1.5}
| \beta_i(x_i) |^2 \leq C \epsilon_i^2 + | \alpha_i(x_i)|^2.
\end{equation}
Equation \eqref{eqn:compactness0} implies that for large $i$ 
\[
  | \beta_i(x_i) | = \| \beta_i (x_i) \|_{L^{\infty}} \geq C \| \beta_i \|_{L^2} = C\left(\frac{1}{2}+ \pi \epsilon_i \deg L\right) \geq \frac{C}{4} > 0,
\]
so $| \beta_i(x_i) |$ is separated from zero and \eqref{eqn:compactness1} forces $| \alpha_i(x_i) |$ to converge to zero. Then \eqref{eqn:compactness1.5} yields $| \beta_i(x_i) | \to 0$, which is a contradiction. This finishes the proof that the sequence $\lambda_i$ is bounded above.
\end{step}

\begin{step}[Convergence of $\lambda_i$] The next step is to show that the sequence $\lambda_i$ is separated from zero. This follows from the inequality
\[ \lambda_i^2 ( | \alpha_i|^2 + |\beta_i|^2 ) \geq 2 \lambda_i^2 | \alpha_i | |\beta_i| = 2  | \lambda_i e^{f_i/2} \alpha_i | | \lambda_i e^{-f_i/2} \beta_i | = 2 | \alpha_i' | | \beta_i' |. \]
Integrating over $\Sigma$ and taking the square root, we obtain
\[ \lambda_i \geq \sqrt{ 2 \int_\Sigma |\alpha_i' | | \beta_i' | } \]
Since $\alpha_i' \to \alpha'$ and $\beta_i' \to \beta'$ uniformly on $\Sigma$ and the limiting sections are holomorphic and non-zero, the integral on the right-hand side is bounded below by a positive number. Since $\lambda_i$ is bounded above and separated from zero, after passing to a subsequence we can assume that $\lambda_i$ converges to a positive number, $\lambda$ say. Therefore, 
\[ e^{f_i/2} \alpha_i = \lambda_i^{-1} \alpha_i' \to \lambda^{-1} \alpha', \]
\[ e^{-f_i/2} \beta_i = \lambda_i^{-1} \beta_i' \to \lambda^{-1} \beta' \]
 and after  rescaling $\alpha'$, $\beta'$ we may assume that $\lambda_i = \lambda = 1$.
\end{step}

\begin{step}[Convergence outside the singular set]
 Let $D$ be the union of the zero sets of $\alpha'$ and $\beta'$. Since the sections are holomorphic and not identically zero, $D$ is empty or consists of finitely many points.  We claim that after passing to a subsequence, $f_i$ converges in $C^{\infty}_{\loc}(\Sigma \setminus D)$. To see this, we translate the third equation of \eqref{eqn:multi} for the triple $(A_i, \alpha_i, \beta_i)$ into a partial differential equation for $f_i$:
\[ \begin{split} 
\epsilon_i^2 \Lambda i F_{A_i'} &= \epsilon_i^2 \Lambda i F_{A_i} + \epsilon_i^2 \Delta f_i = - |\alpha_i|^2 + |\beta_i|^2 + \epsilon_i^2 \Delta f_i \\
&= -  e^{-f_i} | \alpha_i' |^2 +  e^{f_i} | \beta_i' |^2 + \epsilon_i^2 \Delta f_i, 
\end{split} \]
or after rearranging
\begin{equation} \label{eqn:compactness2}
 \epsilon_i^2 \Delta f_i - P_i e^{-f_i} + Q_i e ^{f_i} - w_i = 0, 
 \end{equation}
where $P_i = | \alpha_i' |^2$, $Q_i = | \beta_i' |^2$, and $w_i = \epsilon_i^2 \Lambda i F_{A_i'}$. Note that $\epsilon_i \to 0$ and the functions $P_i$, $P_i$, and $w_i$ converge in $C^{\infty}$ to, respectively, $P = |\alpha'|^2$, $Q = | \beta' |^2$, and $w = 0$. Furthermore, $P > 0$ and $Q > 0$ on $\Sigma \setminus D$.

It follows from Proposition \ref{prop:bounds} and Remark \ref{rem:bounds} that the sequence $f_i$ and its derivatives are uniformly bounded on every compact subset of $\Sigma \setminus D$. Therefore, we can choose a subsequence of $f_i$ which converges in $C^{\infty}_{\loc}(\Sigma \setminus D)$. The limit is a smooth function $f \colon \Sigma \setminus D \to \R$. Let $g = e^{f/2}$ be the corresponding complex gauge transformation. Now set $(A, \alpha, \beta) = ( g^{-1}( A' ), g^{-1} \alpha', g\beta')$. The triple is well-defined on $\Sigma \setminus D$ and $(A_i,  \alpha_i, \beta_i) \to (A, \alpha, \beta)$ in $C^{\infty}_{\loc}$ on $\Sigma \setminus D$.
\end{step}

\begin{step}[The limiting configuration]
After passing to the limit $i \to \infty$ in \eqref{eqn:compactness2}, we get
\[ - e^{-f} |\alpha'|^2 + e^f | \beta'|^2 = 0, \]
or equivalently $e^f = | \alpha'| / |\beta'|$. Therefore, the limiting configuration $\varphi = (\alpha, \beta)$ satisfies
\[- |\alpha|^2 + |\beta|^2 = - e^{-f} | \alpha'|^2 + e^f | \beta'|^2 = 0. \]
The remaining equations $\del_A \alpha = 0$, $\del_A \beta = 0$, and $\alpha \beta = 0$ are obtained from passing to the limit in the corresponding equations for $A_i, \alpha_i$, and $\beta_i$.  Moreover, 
\[ | \varphi |^4 = \left( |\alpha|^2 + |\beta|^2 \right)^2 = \left( e^{-f} | \alpha' |^2 + e^f | \beta'|^2 \right)^2 = 4 |\alpha'|^2 | \beta'|^2 \]
extends to a smooth function on $\Sigma$ whose zeroes are the points in $D$. At a point $x \in D$, the norm $| \varphi | = 2 \sqrt{ | \alpha' | | \beta' |}$ vanishes to the order 
\[
\frac{1}{2} \left( \mathrm{ord}_x (\alpha') + \mathrm{ord}_x (\beta') \right)
\]
where $\mathrm{ord}_x (\alpha')$ and $\mathrm{ord}_x (\beta')$ are the orders of vanishing of $\alpha'$ and $\beta'$ at $x$.

Assume now that $\rank E = 2$. Let $A^2 = A \otimes A$ denote the tensor product connection on $L^2 = L \otimes L$. We will show that the pair $(L^2, A^2)$ is trivial as a bundle with a connection. It will follow then that $A$ is flat and has holonomy contained in $\Z_2$. Since $\rank E = 2$ and $\alpha \beta = 0$,  we have the following exact sequence over $\Sigma \setminus D$:
\begin{equation*}
\label{eq:shortexact}
\begin{tikzcd}
0 \arrow{r} & L^* \otimes K^{-1/2} \arrow{r}{\alpha} & E \arrow{r}{\beta} & L^* \otimes K^{1/2} \arrow{r} & 0,
\end{tikzcd}
\end{equation*}
which yields a bundle isomorphism over $\Sigma \setminus D$
\[ \psi_{\alpha\beta} \colon  L^{-2} \otimes K^{-1/2} \otimes K^{1/2} \longrightarrow \det E. \]
Both line bundles $\det E$ and $K^{-1/2} \otimes K^{1/2}$ are trivial as bundles with connections. Thus, $\psi_{\alpha\beta}$ is a section of $L^2$ over $\Sigma \setminus D$ which satisfies
\[ \del_{A^2} \psi_{\alpha\beta} = 0, \qquad | \psi_{\alpha\beta} | = \frac{ | \alpha | }{|\beta|} = 1 \]
By Lemma \ref{lem:flatness}, $\nabla_{A^2} \psi_{\alpha\beta} = 0$ and $(L^2, A^2)$ is trivial as a bundle with a connection.

If $\rank E > 2$, let $F$ be the complex subbundle of $\restr{E}{\Sigma \setminus D}$ spanned by the subspaces $\im \alpha$ and $(\ker \beta)^{\perp}$, interpreting $\alpha$ and $\beta$ as in the short exact sequence \eqref{eq:shortexact}. 
Equip $F$ with the unitary connection $\tilde{B}$ obtained from $B$ using the orthogonal projection $\pi_F \colon E \to F$, and let $\del_{\tilde{B}} = (\nabla_{\tilde{B}})^{0,1} = \pi_F \del_B$ be the induced holomorphic structure on $F$.
Note that $F \subset E$ is not, in general, a holomorphic subbundle. 
Nevertheless, $\alpha$ and $\beta$ descend to smooth sections
\begin{equation*}
\label{eq:reducedsections}
 \alpha \in \Gamma(\Sigma \setminus D, F  \otimes  L  \otimes K^{1/2}), \qquad \beta \in \Gamma(\Sigma \setminus D, F^* \otimes L^* \otimes K^{1/2}) \]
which are nowhere zero and satisfy $\alpha \beta = 0$; we claim that these sections are in fact holomorphic with respect to $\del_{\tilde{B}}$. This is clear for $\alpha$ since $\del_{\tilde{BA}} \alpha = \pi_F \del_{BA} \alpha = 0$. To prove that $\beta$ is also holomorphic, it is enough to show that for every locally defined section $s$ of $F$ satisfying $\del_{\tilde{B}} s = 0$ we have $\del_A ( \beta(s)) = 0$ (thinking of $\beta$ as a homomorphism from $F$ to $L^* \otimes K^{1/2}$).
The condition $0 = \del_{\tilde{B}} s = \pi_F \del_B s$ implies that $\del_B s$ takes values in $\ker \beta$ and since $\beta \colon E \to L^* \otimes K^{1/2}$ is holomorphic, we have $\del_A(\beta(s)) = \beta(\del_{B} s) = 0$. 

Thus, $\alpha$ and $\beta$ in \eqref{eq:reducedsections} are holomorphich with respect to $\del_{\tilde{B}}$ and so they fit into the short exact sequence \eqref{eq:shortexact} with $F$ in place of $E$.
The previous argument shows that there is a nowhere vanishing section $\psi_{\alpha \beta} \in \Gamma(\Sigma, \setminus D, \det F \otimes L^2)$ satisfying $\del_{BA} \psi_{\alpha\beta} = 0$ and $| \psi_{\alpha\beta}| =1$. Here $F$ is equipped with a connection obtained from $B$ by the orthogonal projection $E \to F$, and $\det F$ has the induced connection. Lemma \ref{lem:flatness} implies that $\nabla_{BA} \psi_{\alpha\beta} = 0$ and $\det F \otimes L^2$ is trivial over $\Sigma \setminus D$. In particular, there is a well-defined square root $(\det F)^{1/2}$ and the connection on $(\det F)^{1/2} \otimes L$ induced from $B$ and $A$ is flat with holonomy contained in $\Z_2$. 
\end{step}

\begin{step}[Convergence of measures]
We claim that if $x$ is a point in $D$ at which the sections $\alpha'$ and $\beta'$ vanish to the order $k = \mathrm{ord}_x (\alpha')$ and $l = \mathrm{ord}_x (\beta')$ respectively, and $B$ is a small disc around $x$, then
\[ \lim_{i \to \infty} \int_B   \frac{i}{2\pi} \Lambda F_{A_i}  = \frac{k-l}{2}. \]
This is proved in the same way as the corresponding statement in Theorem \ref{thm:vortex}, except that now Lemma \ref{lem:flatness} is applied to the connection $A^2$ on $L^2$ and section $\psi_{\alpha\beta}$ (under the assumption that $\rank E = 2$; in the higher rank case we need to twist $L^2$ by $\det F$). The degree of the restriction of $\psi_{\alpha\beta}$ to $\partial B$ is
\[ \mathrm{deg}\left( \psi_{\alpha\beta} \right) = \mathrm{deg} \left( \frac{ \alpha }{ |\alpha|}\right) - \mathrm{deg}\left( \frac{\beta}{ |\beta|}\right) = k - l, \]
and the factor $1/2$ enters because we consider $A$ instead of $A^2$. This shows that $D$ is the set underlying the divisor
\[
   D = \sum_x \left( \mathrm{ord}_x (\alpha') - \mathrm{ord}_x (\beta') \right)x
\]
and $i \Lambda F_A = \frac{1}{2} \delta_D$ as measures. In particular, $D$ has degree $2d$.
\end{step}
\end{proof}

\begin{proof}[Proof of Theorem \ref{thm:nsw}]
Our assumptions guarantee that all irreducible solutions to the Seiberg--Witten equations with multiple spinors are gauge-equivalent to configurations pulled back from $\Sigma$ satisfying \eqref{eqn:multi} with $\epsilon = 1$ \cite[Proposition 3.10]{doan}. The bundle $L$ is necessarily pulled back from a bundle over $\Sigma$. Let $\epsilon_i = \| \Psi_i \|_{L^2}^{-1}$ and $\Psi_i' = \epsilon_i \Psi_i$. Then the rescaled sequence $(A_i, \Psi_i')$ satisfies $\| \Psi_i' \|_{L^2} = 1$ and equations \eqref{eqn:multi} with $\epsilon = \epsilon_i$. The statements about the convergence of $(A_i, \Psi_i')$ and the structure of $Z$ follow now from Theorem \ref{thm:multi}. 

It remains to show the convergence of currents. Every $\eta \in \Omega^1(Y)$ is of the form
\[ \eta = f (t,x) dt + \xi (t), \]
where $f \in C^{\infty}(Y)$, $(t,x)$ denote the product co-ordinates on $S^1 \times \Sigma$, and $\xi$ is an $S^1$-family of one-forms $\xi(t) \in \Omega^1(\Sigma)$. The forms $F_{A_i}$ are pulled back from $\Sigma$, so
\[ \int_Y \frac{i}{2\pi} F_{A_i} \wedge \eta = \int_{S^1} \int_{\Sigma} \frac{i}{2\pi} F_{A_i} f(t,x) dt = \int_{\Sigma} \frac{i}{2\pi} g(x) F_{A_i}, \]
where $g(x) = \int_{S^1} f(t,x) dt$. Passing to the limit $i \to \infty$ and using the convergence of measures $(i/2\pi) \Lambda F_{A_i} \to \delta_D / 2$ proved in Theorem \ref{thm:multi}, we arrive at
\[ \lim_{i \to \infty} \int_Y \frac{i}{2\pi} F_{A_i} \wedge \eta = \frac{1}{2} \sum_k m_k g(x_k) = \frac{1}{2} \sum_k m_k \int_{S^1} f(t,x) dt = \frac{1}{2} \sum_k m_k \int_{S^1 \times \{ x_k \}} \eta, \]
which is the equality that we wanted to prove.

It remains to address the statement made in point $(3)$ of the theorem about the weights associated with the connected components of $Z$.
If the singular set $Z$ is a disjoint union $\bigcup_k Z_k$ of connected one-dimensional submanifolds, the weight prescribed by Taubes to $Z_k$ is the highest number $N_k$ such that $|\Psi|$ vanishes to the order $N_k / 2$ along $Z_k$ \cite{taubes}. In our case, $Z_k = S^1 \times \{ x_k \}$, $N_k$ is an integer, and by point $(3)$ of Theorem \ref{thm:multi}, we have $|m_k| \leq N_k$. On the other hand, $m_k$ is the coefficient of the current $\frac{1}{2} Z$ and equals the weight prescribed to $Z_k$ by Haydys: the degree of the section of $L^2$ induced from $\Psi$ restricted to a small loop linking $Z_k$  \cite{haydys}; see step $5$ in the proof of Theorem \ref{thm:multi}.

\end{proof}

Finally, we prove Theorem \ref{thm:generalised}. The following inequality will be useful.

\begin{lem} \label{lem:inequality}
Let $a$, $b$, $x$, and $y$ be positive numbers. Then for all $\xi \in (0, \infty)$
\[ \xi^{-a} x + \xi^{b} y \geq K x^{\frac{b}{a+b}} y^{\frac{a}{a+b}} \]
where $K$ is positive and depends only on $a$ and $b$. 
\end{lem}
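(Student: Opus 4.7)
The plan is to prove this by optimizing the left-hand side over $\xi$, which amounts to a weighted AM--GM inequality. Both quantities $\xi^{-a}x$ and $\xi^b y$ are positive, one decreasing and one increasing in $\xi$, so the function $\xi \mapsto \xi^{-a}x + \xi^b y$ has a unique minimum on $(0,\infty)$ that gives the sharp constant.

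First I would differentiate the left-hand side with respect to $\xi$ and set the derivative to zero: the equation $-a\xi^{-a-1}x + b\xi^{b-1}y = 0$ yields the critical point
\[ \xi_* = \left( \frac{ax}{by} \right)^{\frac{1}{a+b}}. \]
Substituting $\xi_*$ back gives
\[ \xi_*^{-a} x = \left( \frac{b}{a} \right)^{\frac{a}{a+b}} x^{\frac{b}{a+b}} y^{\frac{a}{a+b}}, \qquad \xi_*^{b} y = \left( \frac{a}{b} \right)^{\frac{b}{a+b}} x^{\frac{b}{a+b}} y^{\frac{a}{a+b}}, \]
and summing shows the minimum equals $K\, x^{b/(a+b)} y^{a/(a+b)}$ with
\[ K = \left( \frac{b}{a} \right)^{\frac{a}{a+b}} + \left( \frac{a}{b} \right)^{\frac{b}{a+b}}, \]
which depends only on $a$ and $b$.

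Alternatively, one can bypass the calculus with weighted AM--GM: for positive $p + q = 1$ and positive $u,v$ one has $pu + qv \geq u^p v^q$. Taking $p = b/(a+b)$, $q = a/(a+b)$, $u = \frac{a+b}{b}\xi^{-a}x$, and $v = \frac{a+b}{a}\xi^b y$ gives $pu + qv = \xi^{-a}x + \xi^b y$, while the exponent of $\xi$ on the right is $-ap + bq = 0$, so the $\xi$-dependence cancels and one recovers the same bound. There is no real obstacle here; the only subtlety is bookkeeping the exponents correctly, and the constant $K$ produced is explicit in $a,b$ as required.
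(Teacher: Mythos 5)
Your proof is correct and follows essentially the same route as the paper: minimise $\xi \mapsto \xi^{-a}x + \xi^{b}y$ over $\xi$, find the critical point $\xi_* = (ax/(by))^{1/(a+b)}$, and substitute back to get the constant $K = (b/a)^{a/(a+b)} + (a/b)^{b/(a+b)}$, which matches the paper's. The weighted AM--GM variant you sketch is a clean alternative that yields the same (sharp) constant without calculus, but it is not needed beyond what the paper does.
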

\begin{proof}
Consider the function $f \colon (0, \infty) \to (0, \infty)$ given by 
\[ f(\xi) = \xi^{-a} x + \xi^{b} y. \]
It attains a global minimum at
\[ \xi_0 = \left( \frac{ax}{by} \right)^{\frac{1}{a+b}}. \]
and the value of $f$ at $\xi_0$ is
\[ f(\xi_0) = K x^{\frac{b}{a+b}} y^{\frac{a}{a+b}}  \]
with  
\[ K =  (a/b)^{\frac{b}{a+b}} + (b/a)^{\frac{a}{a+b}} \]
\end{proof}

\begin{proof}[Proof of Theorem \ref{thm:generalised}]  First, order the weights according to their signs:
\[ (k_1, \ldots, k_N) = (a_1, \ldots, a_n, -b_1, \ldots, -b_m), \]
where $n+m = N$ and $a_j  > 0$ and $b_j > 0$. Denote 
\[ \alpha_i = ( \alpha^1_i, \ldots, \alpha^n_i) = (\varphi^1_i, \ldots, \varphi^n_i), \]
\[ \beta_i = ( \beta^1_i, \ldots, \beta^m_i) = (\varphi^{n+1}_i, \ldots, \varphi^N_i). \]
When all the weights are positive and $\tau < 0$, or when one of the sequences $\| \alpha_i \|_{L^{\infty}}$ or $\| \beta_i \|_{L^{\infty}}$ converges to zero, the proof is the same as for Theorem \ref{thm:vortex}. Let us focus on the more difficult case when the weights are of mixed signs and $\| \alpha_i \|_{L^{\infty}}$, $\| \beta_i \|_{L^{\infty}}$ are bounded below by a positive number. 

As in the proof of Theorem \ref{thm:multi}, we find sequences of $\lambda_i > 0$ and complex gauge transformations $g_i \in \mathcal{G}^c$ such that the sequence 
\[ (A_i', \alpha_i', \beta_i') = g_i(A_i, \lambda_i \alpha_i, \lambda_i \beta_i)\]
 converges uniformly with all derivatives on $\Sigma$ to a limit $(A', \alpha', \beta')$. In order to do this, first choose $g_i$ so that $g_i (A_i)$ converges. After changing $g_i$ by constant complex gauge transformations we may assume that $\| g_i (\alpha_i) \|_{L^2} = \| g_i ( \beta_i) \|_{L^2}$. Indeed, for each $i$ such a constant gauge transformation is given by a number $\mu > 0$ satisfying
\[ \sum_{j=1}^n \mu^{a_j} \| \alpha_i^j \|_{L^2}^2 - \sum_{j=1}^{m} \mu^{-b_j} \| \beta_i^j \|_{L^2}^2 = 0. \]
Since the left-hand side diverges to $\infty$ when $\mu \to \infty$, and to $- \infty$ when $\mu \to 0$, such $\mu$ exists. Then we choose the scaling constants $\lambda_i$ so that
\[ \| \lambda_i g_i ( \alpha_i ) \|_{L^2} = \| \lambda_i g_i ( \beta_i) \|_{L^2} = 1, \]
which, after passing to a subsequence, guarantees the existence of non-zero limiting sections $\alpha' = ((\alpha')^1, \ldots, (\alpha')^n)$ and $\beta' = ((\beta')^1, \ldots, (\beta')^m)$. After changing the original sequence by a sequence of unitary gauge transformations, we may assume that $g_i = e^{f_i/2}$ for smooth functions $f_i \colon \Sigma \to \R$. 

The next step is to show that $\lambda_i$ is bounded above and separated from zero. To prove the latter observe that at least one of the sections $(\alpha')^i$ is non-zero. Without loss of generality assume that $(\alpha')^1\neq 0$. Likewise, we may assume that $(\beta')^1 \neq 0$. Then 
\[ \lambda_i^2 \left( | \alpha_i |^2 + | \beta_i |^2 \right) \geq | \lambda_i \alpha_i^1 |^2 + | \lambda_i \beta_i^1 |^2 =  e^{-a_1 f_i } | (\alpha_i')^1 |^2 + e^{ b_1 f_i } | (\beta_i')^1 |^2. \]
Applying Lemma \ref{lem:inequality} to the right-hand side, integrating over $\Sigma$, and using the $L^2$-bound for $\varphi_i = ( \alpha_i, \beta_i)$, we obtain  
\[ C \lambda_i^2 \geq K \int_{\Sigma} | (\alpha_i')^1 |^{\kappa_1} | (\beta_i')^1 |^{\kappa_2} \]
for some positive exponents $\kappa_1$ and $\kappa_2$ depending on $a_1$ and $b_1$. Now the right-hand side of the inequality is bounded below by a positive constant, because of the convergence $(\alpha_i')^1 \to (\alpha')^1$ and $(\beta_i')^1 \to (\beta')^1$ and the assumption that the limiting sections are non-zero. This shows that the sequence $\lambda_i$ is separated from zero. 

An upper bound is found in the same way as in the proof of Theorem \ref{thm:multi}. Let
\[ u_i = \sum_{j=1}^n a_j | \alpha_i^j |^2 \qquad \textnormal{and} \qquad v_i = \sum_{j=1}^m b_j | \beta_i^j|^2. \]
Suppose without loss of generality that $\tau \leq 0$. We have, as in \eqref{eqn:compactness1.2},
\[ v_i = \epsilon_i^2 \Lambda i F_{A_i'} - \epsilon_i^2 \Delta f_i + u_i + \tau. \]
Using the maximum principle, one shows that at the point $x_i$ where $v_i$ attains a global maximum, there is a lower bound for $\Delta f_i(x_i)$. As in the situation of Theorem \ref{thm:multi}, if $\lambda_i$ is unbounded, then  $ u_i v_i \to 0$ uniformly. It follows that $u_i(x_i) \to 0$ and passing to the limit $i \to \infty$ we arrive at a contradiction $v_i(x_i) \to 0$. This shows that $\lambda_i$ must be bounded above. Since it is also separated from zero, after passing to a subsequence it may be assumed to be convergent and we may as well assume that $\lambda_i = 1$. 

The function $f_i$ satisfy the partial differential equation
\[ \epsilon_i ^2 \Delta f_i - \sum_{j=1}^n P_i^j e^{- a_j f_i} + \sum_{j=1}^m Q_i^j e^{b_j f_i} - w_i = 0, \]
where $P_i^j = a_j | g_i (\alpha_i^j) |^2$, $Q_i^j = b_j | g_i ( \beta_i^j) |^2$, and $w_i = \epsilon_i^2 i \Lambda F_{g_i(A_i)}$. We have
\[ P_i^j \longrightarrow P^j, \quad Q_i^j \longrightarrow Q^j  , \quad w_i \longrightarrow 0 \quad \textnormal{in } C^{\infty},\]
where $P^j = a_j | (\alpha')^j|^2$ and $Q^j = b_j | (\beta')^j |^2$. Let $D $ be the union of the zero sets of the limiting sections $\alpha'$ and $\beta'$. Since they are not identically zero, $D$ is a (possibly empty) finite subset of $\Sigma$.  Over $\Sigma \setminus D$ we have 
\[ P^1 + \ldots + P^n > 0, \qquad \textnormal{and} \qquad Q^1 + \ldots + Q^m > 0. \]
Therefore, by Proposition \ref{prop:bounds} and Remark \ref{rem:bounds} we can bound  the the functions $f_j$ and their derivatives on any compact subset of $\Sigma \setminus D$, and consequently, extract a subsequence converging smoothly to a function $f$ on $\Sigma \setminus D$. As before, this leads to the smooth convergence of $(A_i, \alpha_i, \beta_i)$ to $e^{-f/2}(A', \alpha', \beta')$ on $\Sigma \setminus D$.
 \end{proof}

\newpage
\bibliography{adiabatic.bib} 
\bibliographystyle{alphanum_n.bst}
\end{document}